\newtheorem{teo}{Theorem}[section]
\newtheorem{lem}[teo]{Lemma}
\newtheorem{defi}[teo]{Definition}
\newtheorem{cor}[teo]{Corollary}
\newtheorem{prop}[teo]{Proposition}
\newtheorem*{teo*}{Theorem}
\newtheorem*{defi*}{Definition}
\theoremstyle{remark}
\newtheorem*{rem}{Remark}
\newcommand{\R}{\mathbb{R}}
\newcommand{\Q}{\mathcal{Q}}
\newcommand{\Z}{\mathbb{Z}}
\newcommand{\N}{\mathbb{N}}
\newcommand{\Hom}{\text{Hom}}
\newcommand{\s}{\sigma}
\renewcommand{\t}{\tau}
\renewcommand{\O}{\mathcal O}
\renewcommand{\P}{\mathcal P}
\newcommand{\MD}{\mathcal{MD}}
\newcommand{\Li}{\mathcal L}
\newcommand{\Spec}{\text{Spec}}
\newcommand{\G}{\Gamma}
\newcommand{\Om}{\Omega}
\newcommand{\T}{\mathbb{T}}
\renewcommand{\L}{\mathbb{L}}
\newcommand{\Sym}{\mathrm{Sym}}
\renewcommand{\Pi}{\tilde{P}_{n}}
\newcommand{\Omn}{\Omega_n}
\newcommand{\Lie}{\mathrm{Lie}}
\newcommand{\Comm}{\mathrm{Comm}}
\renewcommand{\Hom}{\mathrm{Hom}}
\renewcommand{\labelitemi}{$\bullet$}
\title{Poisson bivectors and Poisson brackets\\ on affine derived stacks}   
\date{} 
\author{Valerio Melani \\
Institut de Math\'ematiques de Jussieu - Paris Rive Gauche\\
valerio.melani@imj-prg.fr}
\begin{document}
\maketitle

\begin{abstract}
Let $A$ be a commutative dg algebra concentrated in degrees $(-\infty, m]$, and let $\Spec\, A$ be the associated derived stack. We give two proofs of the existence of a canonical map from the moduli space of shifted Poisson structures (in the sense of \cite{PTVV}) on $\Spec\, A$ to the moduli space of homotopy (shifted) Poisson algebra structures on $A$. The first makes use of a more general description of the Poisson operad and of its cofibrant models, while the second is more computational and involves an explicit resolution of the Poisson operad.
\end{abstract}

\section*{Introduction}

In classical Poisson geometry, one defines a Poisson structure on a smooth manifold to be a Poisson bracket on the algebra of global functions, which is just a Lie bracket compatible with the product of functions. This notion (which is of algebraic nature) has a more geometric version. The geometric analog of skew-symmetric biderivations are bivector fields, and quite expectedly one can define a Poisson structure to be a bivector field satisfying some additional property. The equivalence of the two definitions of Poisson structure is a well-known fact in classical algebraic or differential geometry. It should be noted that the geometric approach becomes obligatory in the case of complex manifolds.

Recently, in their paper \cite{PTVV} Pantev, To\"en, Vaqui\'e and Vezzosi introduced the notion of symplectic and Poisson structures in the context of derived algebraic geometry.
Informally speaking, derived algebraic geometry is the study of spaces whose local models are \emph{derived commutative algebras}, that is to say simplicial commutative algebras. If we suppose to be working over a base field $k$ of characteristic zero, the local models can also be taken to be non-positively graded commutative dg-algebras.
See \cite{To} for a recent survey, or \cite{HAG1}, \cite{HAG2}, \cite{Lu} for a complete treatment of the subject.

In attempting to extend Poisson structures to derived algebraic geometry, there are thus two natural approaches: either via bivector fields or via Poisson brackets on the algebra of functions. We will show that these two approaches indeed agree for a huge and geometrically meaningful class of derived stacks. To do so, we will need to use techniques very different from the arguments in the non-derived setting.
To be a bit more specific, \cite{PTVV} use the bivector approach to define a $n$-Poisson structure on a (nice enough) derived algebraic stack. We refer to Section 1 for the precise definitions of the objects appearing below.
\begin{defi*}[ \cite{PTVV}, \cite{To} ] 
Let $X$ be a (nice enough\footnote{explicitly, $X$ has to be a derived Artin stack locally of finite presentation over $k$. This means in particular that its cotangent complex is perfect.}) derived algebraic stack, and let $n \in \Z$. The space of $n$-\emph{shifted Poisson structures on }$X$ is the simplicial set
$$\mathrm{Pois}(X,n) := \mathrm{Map}_{\mathsf{dgLie^{gr}}}(k[-1](2), \mathrm{Pol}(X,n)[n+1]) $$
where $k[-1](2)$ is concentrated in degree 1, pure of weight 2, and has the trivial bracket. The graded complex $\mathrm{Pol}(X,n)$ is the complex of $n$-\emph{shifted polyvector fields}.  
\end{defi*}

The purpose of this paper is to show that, at least for a nice enough affine derived stack $\Spec\, A$ (where $A$ is a derived commutative algebra), the equivalence between Poisson bivectors and Poisson brackets remains true. Our result is further evidence that for nice derived stacks the definition in \cite{PTVV} is the correct derived generalization of Poisson geometry. As we are working in an inherently homotopical context, Poisson brackets have to be given up to homotopy: these are basically $P_{n,\infty}$-structures on $A$ whose (weakly) commutative product is (equivalent to) the one given on $A$.

With this goal in mind, after having fixed our notational conventions in Section 1, we study in Section 2 the relation between the categories of dg-operads and of graded dg-Lie algebras. In particular, we would like to be able to describe the moduli space of Poisson brackets on a given commutative algebra via a mapping space in the category of graded dg-Lie algebras.
This is accomplished in greater generality in Theorem \ref{thm1}.

In Section 3, we apply the results of the previous section to derived algebraic geometry, and we eventually obtain the following result
\begin{teo*}
Let $A$ be a commutative dg algebra concentrated in degrees $(-\infty,m]$, with $m\geq0$, and let $\mathrm{End}_A$ be the (linear) endomorphism operad of the dg-module $A$. Let $X=\mathrm{Spec}\, A$ be the associated derived stack, and let $P^h_{n+1}(A)$ be the homotopy fiber of the morphism of simplicial sets
$$ \text{\emph{Map}}_{\text{\emph{dgOp}}}(\text{P}_{n+1}, \text{\emph{End}}_A) \longrightarrow  \text{\emph{Map}}_{\text{\emph{dgOp}}}(\text{\emph{Comm}}, \text{\emph{End}}_A) $$
taken at the point $\mu_A$ corresponding to the given (strict) multiplication in $A$.

Then there is a natural map in the homotopy category of simplicial sets
$$\mathrm{Pois}(X,n) \longrightarrow P_{n+1}^h(A) \ .$$

Moreover, this is an isomorphism if $\L_X$ is a perfect complex.
\end{teo*}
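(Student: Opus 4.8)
The plan is to rewrite both $\mathrm{Pois}(X,n)$ and $P^h_{n+1}(A)$ as mapping spaces out of the same object $k[-1](2)$ in $\mathsf{dgLie^{gr}}$, and then to reduce the theorem to the construction and analysis of a single comparison morphism between the two graded dg-Lie algebras that control them. On the geometric side this is immediate from the definitions: for the affine stack $X=\Spec A$ one has $\mathrm{Pois}(X,n)=\mathrm{Map}_{\mathsf{dgLie^{gr}}}(k[-1](2),\mathrm{Pol}(X,n)[n+1])$, where the $n$-shifted polyvectors form the graded dg-Lie algebra $\mathrm{Pol}(X,n)=\bigoplus_{p\geq 0}\Sym^p_A(\T_A[-n-1])$, with weight $p$ and the Schouten--Nijenhuis bracket. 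On the operadic side I would invoke Theorem \ref{thm1}: applied to the operad $P_{n+1}$, to the map $\Comm\to P_{n+1}$, to $\mathrm{End}_A$ and to the point $\mu_A$, it identifies the homotopy fiber with a mapping space
$$P^h_{n+1}(A)\ \simeq\ \mathrm{Map}_{\mathsf{dgLie^{gr}}}\big(k[-1](2),\g_A[n+1]\big),$$
where $\g_A$ is the graded dg-Lie algebra governing deformations of $\mu_A$ into a $P_{n+1}$-structure. By construction its weight-$p$ part is a complex of shifted symmetric multiderivations of $A$ (the operadic avatar of $p$-vector fields), the grading coming from the Lie part of a Koszul/cofibrant model of $P_{n+1}$, and its bracket is the operadic Schouten bracket.

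With both sides in this form, the whole statement reduces to producing a weight-preserving morphism of graded dg-Lie algebras
$$\Theta\colon \mathrm{Pol}(X,n)\longrightarrow \g_A$$
and understanding when it is a weak equivalence. The map $\Theta$ is the tautological one: a symmetric $p$-vector field is in particular a shifted symmetric multiderivation, giving a chain map $\Sym^p_A(\T_A[-n-1])\to(\g_A)_p$ in each weight; compatibility with the brackets holds because both the Schouten bracket and its operadic counterpart are defined by insertion and composition of derivations. Post-composing with $\Theta$ yields the natural map $\mathrm{Pois}(X,n)\to P^h_{n+1}(A)$ in the homotopy category, which proves the first assertion, with no hypothesis on $\L_X$.

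For the second assertion I would argue that, when $\L_X$ is perfect, $\Theta$ is a weak equivalence; since mapping out of $k[-1](2)$ preserves weak equivalences in $\mathsf{dgLie^{gr}}$, this upgrades the natural map to an isomorphism in the homotopy category of simplicial sets. The argument is weight by weight: for fixed $p$ one must show that the tautological map from $\Sym^p_A(\T_A[-n-1])$ to the complex of shifted symmetric $p$-derivations is a quasi-isomorphism. This is an HKR-type statement, which follows from Koszulness of $\Comm$: the $p$-derivation complex is the weight-$p$ piece of the Koszul model of $\Comm$ evaluated on $A$, and Koszulness identifies its cohomology with $\Sym^p$ of the (co)tangent complex.

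The place where perfectness of $\L_X$ is indispensable, and which I expect to be the main obstacle, is precisely this weight-wise comparison. The operadic $p$-derivation complex is naturally a complex of maps out of (a resolution of) $\Sym^p_A(\L_A)$, and identifying it with $\Sym^p_A(\T_A)=\Sym^p_A(\L_A^\vee)$ requires commuting dualization past symmetric powers and past the internal Hom of $A$-modules. This holds exactly when $\L_A$, hence $\T_A$, is a perfect $A$-module. Without perfectness the morphism $\Theta$ still exists, but in each weight it is only the inclusion of a direct sum into the corresponding product (or completion), so it fails to be a quasi-isomorphism in general; this is why the hypothesis is needed only for the final isomorphism and not for the existence of the natural map.
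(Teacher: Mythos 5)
Your overall architecture (rewrite both sides as mapping spaces out of $k[-1](2)$ in $\mathsf{dgLie^{gr}}$, construct a tautological comparison map $\Theta$ from polyvectors to symmetric multiderivations, and observe that perfectness of $\L_A$ is exactly what is needed to commute dualization past symmetric powers) matches the paper's Theorem \ref{thm2} and its role in the proof. But there is a genuine gap at the very first operadic step: you write that Theorem \ref{thm1}, ``applied to the operad $P_{n+1}$,'' identifies the homotopy fiber $P^h_{n+1}(A)$ with a mapping space in $\mathsf{dgLie^{gr}}$. Theorem \ref{thm1} does no such thing. It computes the \emph{strict} fiber of $\underline{\Hom}_{\mathsf{dgOp}}(\widetilde{P}_{n+1,Q},\mathrm{End}_A)\to\underline{\Hom}_{\mathsf{dgOp}}(\Comm,\mathrm{End}_A)$, and $\widetilde{P}_{n+1,Q}$ is emphatically \emph{not} a cofibrant replacement of $P_{n+1}$: its commutative product is strictly commutative and its Lie-type generators are strict multiderivations (this is precisely the caveat in the Remark on semi-strict $P_n$-algebras). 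The homotopy fiber in the statement is taken over derived mapping spaces out of the honest operad $P_{n+1}$, which requires a genuine cofibrant model in which the product and the Leibniz compatibilities are also relaxed. Bridging this discrepancy is the actual content of the paper's proof: one chooses cofibrant models $\Comm_\infty\to P_{n+1,\infty}$ with the forgetful map a cofibration (so that the strict fiber computes the homotopy fiber), rigidifies the commutative product via a pullback square through an intermediate operad $\overline{P}_{n+1,Q}$ (strict product, homotopy derivations), and then invokes the comparison of homotopy derivations with strict derivations for cofibrant $A$ (Doubek's result) to land on the space $\widetilde{P}_{n+1,Q}(A)$ that Theorem \ref{thm1} actually controls. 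None of these three rectification steps is automatic, and your proposal silently assumes all of them.

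Two smaller points. First, your appeal to ``Koszulness of $\Comm$'' and an HKR-type identification in the weight-$p$ comparison is not what is needed here: after replacing $A$ by a cofibrant model (which you should state explicitly, since it is also what makes strict and homotopy derivations agree), the arity-$p$ multiderivations are literally $\Hom_A(\L_A^{\otimes p},A)$ with $\L_A$ the K\"ahler differentials, and the only issue is the map $\Sym^p_A(\T_A[-n-1])\to\Hom_A(\Sym^p_A(\L_A[n+1]),A)$, i.e.\ biduality --- your final paragraph gets this right, so the HKR language is a distracting detour rather than an error. Second, the assertion that $\Theta$ is compatible with brackets ``because both are defined by insertion'' is the right idea but is doing real work: the paper either verifies the signs directly or reduces to the weight-one component via a free--forgetful adjunction after equipping $\Li(s^{n+1}\MD(A))^{\mathbb S}[-n-1]$ with a shuffle product satisfying the Leibniz rule; some such argument should be supplied.
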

This is exactly the result we were looking for, since the simplicial set $P_{n+1}^h(A)$ is the natural moduli space of weak Poisson brackets on $A$.

Finally in Section 4 we give an alternative proof of this theorem, which is more computational and uses both an explicit resolution of the strict Poisson operad and the classical concrete definition of $L_{\infty}$-algebra.

The results in this paper are part of an ongoing project aimed at defining and studying higher quantizations of moduli spaces equipped with shifted Poisson structures. This has been recently achieved in the paper \cite{CPTVV}, where the authors give a more general definition of shifted Poisson structure using the results of this paper. For further details on the general project and its goals, we refer to the introductions of \cite{PTVV} and of \cite{CPTVV}, or to the survey \cite{To}.  \\

\noindent \textbf{Acknowledgments.} I am very grateful to Mauro Porta and Gabriele Vezzosi for many interesting and stimulating conversations that helped me during the writing of this paper. I also thank Damien Calaque, Marco Robalo, Nick Rozenblyum, Bertrand To\"en and Bruno Vallette for their useful remarks and suggestions, and the two anonymous referees for their specific and detailed comments. The present paper is part of my PhD thesis at Universit\'e Paris Diderot and Universit\`a di Firenze, under the supervision of G. Vezzosi and G. Ginot. The problem was suggested by my advisors, and also raised independently by N. Rozenblyum and J. Lurie.

\section{Notations}
\begin{itemize}
\renewcommand{\labelitemi}{$\bullet$}
\item $k$ is the base field, which is of characteristic 0.
\item $\mathsf{cdga^{\leq 0}}$ denotes the category of (strictly) commutative differential graded algebras, concentrated in non-positive degrees. We adopt the cohomological point of view, and the differential increases the degree by 1. The category $\mathsf{cdga}^{\leq 0}$ has the usual model structure for which weak equivalences are quasi-isomorphisms, and fibrations are surjections in negative degrees. 
\item $\mathsf{C(k)}$ denotes the category of unbounded cochain complexes over $k$. Its objects will be called also dg-modules. It has the usual model structure for which weak equivalences are the quasi-isomorphisms and fibrations are surjections. It is also a symmetric monoidal model category for the standard tensor product $\otimes_k$.
\item We will use the term \emph{symmetric sequence} to indicate a collection of dg-modules $\{V(m)\}_{m \in \N} $ such that every $V(m)$ has an action of the symmetric group $S_m$ on it. Explicitly, $V(m)$ is a differential graded $S_m$-module, meaning that for every $p \in Z$ the degree $p$ component $V(m)^p$ is an $S_m$-module, and that the differential is a map of $S_m$-modules. Equivalently, one can say that $V(m)$ is a differential graded $k[S_m]$-module, where $k[S_m]$ is the group algebra of $S_m$.
In the literature objects of this kind are sometimes called $\mathbb S$-modules, $\Sigma_*$-objects or also just collections in $C(k)$ (see for example \cite{BM} or Chapter 5 in \cite{LV}).
If $V$ is a symmetric sequence and $f \in V(m)$, we will denote by $f^{\s}$ the image of $f$ under the action of a permutation $\s \in S_m$. We will say that $f$ is \emph{symmetric} if $f^{\s} = f$ for every $\s \in S_m$. Similarly, we will say that $f$ is \emph{anti-symmetric} if $f^{\s}=(-1)^{\s} f$ for every $\s$, where $(-1)^{\s}$ denotes the sign of $\s$.
We will use the notation $V^{\mathbb S}$ for the symmetric sequence of invariants (i.e. of symmetric elements): explicitly, $V^{\mathbb S}(m) = V(m)^{S_m}$. We will allow ourselves to switch quite freely from the point of view of symmetric sequences to the one of graded dg-modules with an action of $S_m$ on the weight $m$ component.

Any symmetric sequence $V$ can be naturally seen as a functor from $C(k)$ to itself, sending a dg module $M$ to $\bigoplus (V(n) \otimes_{S_n} M^{\otimes n})$, where the $S_n$-action on $M^{\otimes n}$ is the natural one. Given two symmetric sequences $V$ and $W$, one can thus consider them as functors and take their composition; it can be shown that this composition comes from a symmetric sequence, denoted $V \circ W$.
\item $\mathsf{dgOp}$ is the category of monochromatic (i.e. uncolored) operads in the symmetric monoidal category $C(k)$ (dg-operads for short). It carries a model structure with componentwise quasi-isomorphisms as weak equivalences and componentwise surjections as fibrations (see \cite{Hi}). In particular, every dg-operad is fibrant. If $\mathcal P$ is a dg-operad, we denote by $\mathcal P_{\infty}$ a cofibrant replacement; then $\mathcal P_{\infty}$-algebras are up-to-homotopy $\mathcal P$-algebras. The operads of commutative algebras, of Lie algebras and of Poisson $n$-algebras will be denoted with Comm, Lie and $P_n$ respectively. Our convention is that a Poisson $n$-algebra has a Lie bracket of degree $1-n$; with this definition, the cohomology of a $E_n$-algebra is a $P_n$-algebra.
Notice however that there are other conventions in the literature: for example in \cite{CFL} the authors define a Poisson $n$-algebra to have Lie bracket of degree $-n$.
\item $\mathsf{dgLie^{gr}}$ is the category whose objects are graded dg-Lie algebras, that is to say graded dg-modules $L$ together with an antisymmetric binary operation $[\cdot , \cdot ] : L \otimes L \to L$ satisfying the (graded) Jacobi identity. The additional (i.e. the non-cohomological one) grading will be called \emph{weight}. The bracket must be of cohomological degree $0$ and of weight $-1$. Notice thus that these are not algebras for the trivial graded version of the Lie operad, since we are asking for the bracket to have weight $-1$.

The fact that the bracket has weight $-1$ is purely conventional: one can of course obtain the same results using brackets of weight 0. The seemingly strange choice is motivated by the observation that for an affine derived stack $\Spec\, A$, the natural bracket on the (shifted) polyvectors fields $\Sym_A (\T_A[-n])$ has weight $-1$. This is the same convention used for example in \cite{PTVV}.
\item Given a dg-module $V$, one defines its suspension $V[1]$ to be the cochain complex $V \otimes k[1]$, where $k[1]$ is the complex who is $k$ in degree $-1$ and $0$ elsewhere. If we do the same on operads, we should be a bit more careful. In fact, given an operad $\O$, the symmetric sequence $\O'(m)=\O(m)[1]$ does not inherit an operad structure. Instead, one defines the suspension of $\O$ to be the symmetric sequence whose terms are $s\O (m) = \O(m) [1-m] $, together with the natural operadic structure on it. A little more abstractly, $s\O$ is just $\O \otimes_H \mathrm{End}_{k[1]}$, where $\otimes_H$ denotes the Hadamard tensor product of operads (see \cite{LV}, Section 5.3.3). Note that the arity $p$ component of $\mathrm{End}_{k[1]}$ is $k[1-p]$; as a $S_p$-module, it is just the sign representation.
This operadic suspension is an auto equivalence of the category $\mathsf{dgOp}$, its inverse being a desuspension functor denoted $\O \mapsto s^{-1}\O$, and which sends $\O$ to $\O \otimes_H \mathrm{End}_{k[-1]}$.

\end{itemize}
\section{Operads and graded Lie algebras }

\subsection{The operad $\Lie$ and some generalizations}
In this section we study the dg-operad $\mathrm{Lie}$ and its cofibrant resolutions. Namely, we describe what it means to have a map from any of these dg-operads to another dg-operad $\O$.

We start by recalling how we can obtain a graded dg-Lie algebra $\Li(\O)$ in a natural way starting with a dg-operad $\O$ (see \cite{KM}, section 1.7). These are classical results in operad theory, and they play a very important role in the remainder of the paper.
\begin{prop}Let $\O$ be a dg-operad. Then the graded dg-module $\Li(\O) = \bigoplus_n \O(n)$ has a natural structure of  a graded dg-Lie algebra, where the Lie bracket is induced by the following pre-Lie product
$$f \star g = \sum_{i=1}^p \sum_{\s \in S_{p,q}^i} (f \circ_i g)^{\s} $$
where $f$ and $g$ are of weight (i.e. arity) $p$ and $q$ respectively, and where $S_{p,q}^i$ is the set of permutations of $p+q-1$ elements such that
$$ \s^{-1}(1)<	\s^{-1}(2)<\dots <\s^{-1}(i)<\s^{-1}(i+q) < \dots < \s^{-1}(p+q-1)   $$
and
$$ \s^{-1}(i)<\s^{-1}(i+1)<\dots < \s^{-1}(i+q-1) \ .$$
\end{prop}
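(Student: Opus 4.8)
The plan is to recognize $f \star g$ as a (graded, right) pre-Lie product on $\Li(\O) = \bigoplus_n \O(n)$, and then to invoke the classical fact that antisymmetrizing a pre-Lie product produces a Lie bracket. Concretely, I would set
$$[f,g] := f \star g - (-1)^{|f||g|}\, g \star f,$$
where $|\cdot|$ denotes cohomological degree, and observe that this is automatically graded antisymmetric, since $-(-1)^{|f||g|}[g,f] = [f,g]$ by a one-line computation with the Koszul sign. The degree and weight claims are immediate from the definition of $\circ_i$: inserting an arity-$q$ element into one slot of an arity-$p$ element yields an arity-$(p+q-1)$ element, so $\star$ (hence $[\cdot,\cdot]$) lowers weight by $1$, i.e. has weight $-1$; and since the structure maps $\circ_i$ and the symmetric group actions are maps of complexes, $\star$ has cohomological degree $0$ and the differential is a derivation for it, so $[\cdot,\cdot]$ is a degree-$0$, weight-$(-1)$ chain map.

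It then suffices to establish the (right) pre-Lie identity, namely that the associator
$$a(f,g,h) := (f \star g)\star h - f \star (g \star h)$$
satisfies $a(f,g,h) = (-1)^{|g||h|} a(f,h,g)$. Granting this, the graded Jacobi identity for $[\cdot,\cdot]$ follows by a purely formal computation: expanding the double brackets in terms of $\star$ and collecting terms, the Jacobiator becomes an alternating sum of associators over the cyclic permutations of $f,g,h$, which cancels precisely because each associator is symmetric in its last two slots. This reduction is classical, and I would state it as an abstract lemma about graded pre-Lie algebras rather than reprove it in the operadic setting.

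To prove that $a(f,g,h)$ is symmetric in $g$ and $h$, I would expand both $(f\star g)\star h$ and $f\star(g\star h)$ using the two operad axioms for iterated partial compositions: the \emph{sequential} axiom $(f\circ_i g)\circ_{i+j-1} h = f \circ_i (g\circ_j h)$, governing insertions of $h$ into a slot coming from $g$, and the \emph{parallel} axiom $(f\circ_i g)\circ_j h = (-1)^{|g||h|}(f\circ_{j'} h)\circ_i g$, governing insertions of $g$ and $h$ into distinct slots of $f$ (here $j'$ is the reindexed slot). The terms of $(f\star g)\star h$ in which $h$ lands inside the inserted copy of $g$ reassemble, via the sequential axiom together with the equivariance of the operad's structure maps under the symmetric group actions, into exactly $f\star(g\star h)$, and so cancel in the associator. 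The surviving terms are those in which $g$ and $h$ occupy two different slots of $f$; by the parallel axiom and a reindexing of the shuffle sets $S^i_{p,q}$ these are visibly invariant, up to the Koszul sign $(-1)^{|g||h|}$, under exchanging $g$ and $h$. This gives the pre-Lie identity.

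The step I expect to be the main obstacle is the combinatorial bookkeeping in this last paragraph: one must verify that composing a shuffle inserting $g$ with one inserting $h$ reproduces, after using equivariance, the correct shuffle sets $S^i_{p,q}$ for both the nested and the disjoint contributions, and that the Koszul signs generated by permuting the graded inputs agree on the two sides. This is a finite but delicate check. The conceptual content is entirely contained in the associativity and equivariance axioms of the operad, and the signs are forced by the Koszul rule, so no genuinely new idea is needed beyond careful accounting — which is why I would keep it as a verification rather than expand each shuffle sum in full.
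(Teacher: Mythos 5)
Your proposal is correct and follows essentially the same route as the paper, which likewise antisymmetrizes the pre-Lie product and reduces everything to checking that the associator is graded symmetric in $g$ and $h$, deferring the shuffle bookkeeping to a direct computation (citing \cite{KM} and Section 5.4.6 of \cite{LV}). Your split of the associator into nested insertions (cancelling via the sequential axiom) and parallel insertions (symmetric under exchanging $g$ and $h$ up to the Koszul sign) is exactly the intended verification.
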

Recall that one obtains a Lie bracket starting from a pre-Lie structure in a natural way: in our case, $[f,g]=f\star g - (-1)^{|f||g|}g\star f$. One has of course to check that the $\star$ operation defines a pre-Lie product (and therefore a Lie bracket): this is done by direct computation, showing that the so called \emph{associator} $f \star(g\star h) - (f \star g) \star h$ is (graded) symmetric on $g$ and $h$ (see \cite{LV}, Section 5.4.6).

The Lie bracket defined above has a first nice property: the following lemma is a straightforward consequence of the definition of the pre-Lie product.
\begin{lem}
Let $\O$ be a dg-operad, and let $f, g \in \Li(\O)$ be two symmetric elements. Then their bracket in $\Li(\O)$ remains symmetric.
\end{lem}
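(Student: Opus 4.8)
The plan is to reduce everything to a single computation about the pre-Lie product. Since $[f,g] = f \star g - (-1)^{|f||g|} g \star f$, it suffices to prove that $f \star g$ is symmetric whenever $f$ and $g$ are; applying this with the roles of $f$ and $g$ exchanged then shows $g \star f$ is symmetric as well, and a difference of symmetric elements is symmetric. So I fix symmetric $f \in \O(p)$ and $g \in \O(q)$, set $N = p+q-1$, and aim to show that $f \star g \in \O(N)$ is invariant under the full $S_N$-action. The key claim is that $f \star g$ is, up to a nonzero scalar, the complete symmetrization $\sum_{\s \in S_N}(f \circ_1 g)^{\s}$ of the single partial composite $e := f \circ_1 g$; since such a symmetrization is tautologically $S_N$-invariant, this finishes the proof.

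To establish the claim I would first exploit the operadic equivariance axioms together with the symmetry of $f$ and $g$. On the one hand, right-equivariance of $\circ_i$ combined with $g^{\t} = g$ for all $\t \in S_q$ shows that $e = f\circ_1 g$ is invariant under the subgroup of $S_N$ permuting the block of $q$ inputs coming from $g$ (positions $1,\dots,q$). On the other hand, left-equivariance combined with $f^{\rho} = f$ for all $\rho \in S_p$ yields two facts: first, that every partial composite $f \circ_i g$ lies in the single $S_N$-orbit of $e$ (a cycle sending $1$ to $i$ moves the insertion slot while fixing $f$), and second, that $e$ is invariant under the subgroup permuting the remaining $p-1$ inputs of $f$ (positions $q+1,\dots,N$). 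Together these identify the stabilizer of $e$ as containing the Young subgroup $S_q \times S_{p-1} \leq S_N$ acting on the two blocks $\{1,\dots,q\}$ and $\{q+1,\dots,N\}$.

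Consequently each summand $(f\circ_i g)^{\s}$ appearing in $f \star g$ equals $e^{\gamma}$ for a suitable $\gamma \in S_N$, so the whole expression is a sum of elements of the orbit of $e$. The remaining, and genuinely combinatorial, point is to check that the index set $\{(i,\s) : 1\le i \le p,\ \s \in S_{p,q}^i\}$ maps onto a complete and irredundant set of representatives for the coset space $S_N/(S_q\times S_{p-1})$. The two ordering conditions defining $S_{p,q}^i$ are exactly tailored for this: unwinding them shows that $\s^{-1}(j)=j$ for $j \leq i$, while the values $\{i{+}1,\dots,i{+}q{-}1\}$ and $\{i{+}q,\dots,p{+}q{-}1\}$ are shuffled among the last $N-i$ positions in their respective natural orders, which is precisely the data distinguishing cosets modulo $S_q \times S_{p-1}$. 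A cardinality check confirms the bijection, since the hockey-stick identity gives
$$ \sum_{i=1}^{p} |S_{p,q}^i| = \sum_{i=1}^{p}\binom{p+q-1-i}{q-1} = \binom{p+q-1}{q} = \frac{N!}{q!\,(p-1)!} = |S_N / (S_q\times S_{p-1})| \ . $$
I expect this verification of the coset-representative property to be the main obstacle, as it is the only step that is not purely formal and requires carefully matching the two ordering conditions against the chosen Young subgroup. Granting it, we obtain $f \star g = \tfrac{1}{q!\,(p-1)!}\sum_{\s \in S_N}(f\circ_1 g)^{\s}$, which is manifestly symmetric, and the lemma follows.
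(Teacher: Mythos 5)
Your argument is correct, and in fact the paper offers no proof at all to compare it with: the lemma is dispatched there with the remark that it is ``a straightforward consequence of the definition of the pre-Lie product.'' Your route --- identifying $f\star g$ with $\tfrac{1}{q!\,(p-1)!}\sum_{\s\in S_N}(f\circ_1 g)^{\s}$ --- is the standard way to make that remark precise, and all the ingredients check out: the two equivariance identities (which the paper records explicitly in the proof of the proposition immediately following the lemma) do show that $e=f\circ_1 g$ is fixed by the Young subgroup $S_q\times S_{p-1}$ and that every $f\circ_i g$ lies in its orbit; the ordering conditions defining $S^i_{p,q}$ do force $\s^{-1}(j)=j$ for $j\le i$ and leave a $(q-1,p-i)$-shuffle of $\{i+1,\dots,p+q-1\}$, so $|S^i_{p,q}|=\binom{p+q-1-i}{q-1}$ and your hockey-stick computation gives the right total $\binom{p+q-1}{q}=[S_N:S_q\times S_{p-1}]$. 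The one point you flag as ``granted'' --- that the pairs $(i,\s)$ hit each coset exactly once --- deserves the extra half-sentence that closes it: the coset of the permutation $\gamma_{i,\s}$ carrying $e$ to $(f\circ_i g)^{\s}$ is determined by the $q$-element set $P=\s^{-1}(\{i,\dots,i+q-1\})$ of positions receiving the inputs of $g$; since $\min P=i$ and the shuffle is recovered from $P\setminus\{i\}$, the assignment $(i,\s)\mapsto P$ is a bijection onto $q$-subsets of $\{1,\dots,p+q-1\}$, so cardinality plus this injectivity finishes the coset claim (a raw cardinality count alone would not). A small stylistic alternative that avoids cosets entirely is to check directly that right translation by any $\tau\in S_N$ permutes the summands of the full symmetrization, but that is the same computation in different clothing. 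Also note there are no hidden Koszul signs to worry about here, since the paper's notion of ``symmetric'' and the equivariance axioms are stated without signs.
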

In particular, $\Li(\O)$ has a sub-Lie algebra of symmetric elements $\Li(\O)^{\mathbb S}$.

Our first goal is to use the construction of $\Li(\O)$ to find an alternative description to the set $\Hom_{\mathsf{dgOp}}(\mathrm{Lie},\O)$.

As an operad, $\mathrm{Lie}$ admits a very nice presentation : it is generated by a binary operation of degree 0 which is antisymmetric and satisfies the Jacobi identity. More specifically, if $l \in \mathrm{Lie}(2)_0$ is the generator, it has to satisfy $l \circ_1 l + (l \circ_1 l)^{(123)} + (l \circ_1 l)^{(132)} = 0$.

Thus we can safely say that
$$\Hom_{\mathsf{dgOp}}(\mathrm{Lie},\O) = \{ x \in \O(2)_0 | x^{(12)}=-x \ \text{and} \ x \circ_1 x + (x \circ_1 x)^{(123)} + (x \circ_1 x)^{(132)} = 0\}.$$

In Section 1, we defined the operadic suspension, which is an auto-equivalence of the category of dg-operads. The operad $s\mathrm{Lie}$ has one generator in arity 2 of degree 1, which is now symmetric; the Jacobi relation still holds in the same form, since it only involves even permutations. Note that algebras for this operad are just dg-Lie algebras whose bracket is of degree 1, or equivalently dg-modules $V$ with a dg-Lie algebra structure on $V[-1]$.
The operadic suspension being an equivalence, we have in particular $\Hom_{\mathsf{dgOp}}(\mathrm{Lie},\O) \cong \Hom_{\mathsf{dgOp}}(s\mathrm{Lie},s\O)$. Maps from the operad $s\mathrm{Lie}$ have a nice description in terms of maps of graded dg-Lie algebras.
\begin{prop} Let $\O$ be a dg-operad. Then we have
$$\Hom_{\mathsf{dgOp}}(s\mathrm{Lie},\O) \cong \Hom_{\mathsf{dgLie^{gr}}}(k[-1](2) ,\Li(\O)^{\mathbb S})$$
where $k[-1](2)$ is the graded dg-Lie algebra which has just $k$ in degree 1 and weight 2, with zero bracket, while $\Li$ is the functor $\mathsf{dgOp} \to \mathsf{dgLie^{gr}}$ defined at the beginning of this section.
\end{prop}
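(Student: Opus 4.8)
The plan is to use the presentation of $s\mathrm{Lie}$ by generators and relations on the one side, and the universal property of the generator of $k[-1](2)$ on the other, and then to check that the two resulting sets of conditions literally coincide. First I would observe that a morphism of operads $s\mathrm{Lie}\to\O$ is completely determined by the image $y\in\O(2)_1$ of the (degree $1$, symmetric) binary generator, and that such a $y$ defines a morphism precisely when it is symmetric ($y^{(12)}=y$), a cycle ($dy=0$, since $s\mathrm{Lie}$ carries the zero differential), and satisfies the Jacobi relation
$$ y\circ_1 y + (y\circ_1 y)^{(123)} + (y\circ_1 y)^{(132)} = 0. $$
On the other side, a morphism of graded dg-Lie algebras $\phi\colon k[-1](2)\to\Li(\O)^{\mathbb S}$ is determined by the image $y=\phi(\xi)$ of the generator $\xi$; since $\phi$ must preserve the cohomological degree and the weight, $y$ lies in the degree-$1$, weight-$2$ part of $\Li(\O)^{\mathbb S}$, which by definition is $\O(2)^{S_2}$ in degree $1$, i.e. a \emph{symmetric} degree-$1$ element of $\O(2)$ (the symmetry is thus built into the codomain, which is a sub-Lie-algebra by the preceding Lemma). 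Being a chain map forces $dy=0$, and being a Lie homomorphism forces $[y,y]=[\phi(\xi),\phi(\xi)]=\phi([\xi,\xi])=0$, because $\xi$ has zero bracket.

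So three of the four conditions (symmetry, degree/weight, cycle) match on the nose, and both the assignment $\phi\mapsto\phi(\xi)$ and the assignment sending an operad morphism to the image of its generator produce the same datum $y$. The only thing left to prove is that, for a symmetric degree-$1$ element $y\in\O(2)$, the condition $[y,y]=0$ in $\Li(\O)^{\mathbb S}$ is equivalent to the Jacobi relation above. This is the heart of the matter.

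To establish this equivalence I would compute the self-bracket explicitly. Since $|y|=1$, the Lie bracket reduces to $[y,y]=y\star y-(-1)^{|y||y|}y\star y=2\,y\star y$, so (working over a field of characteristic $0$) it suffices to analyse $y\star y$. With $p=q=2$ the pre-Lie product is a sum over the shuffle sets $S_{2,2}^1$ and $S_{2,2}^2$; unwinding the defining inequalities (the conditions force $\s^{-1}(1)$ to be minimal in the first case and the whole tuple to be increasing in the second) gives $S_{2,2}^1=\{\mathrm{id},(23)\}$ and $S_{2,2}^2=\{\mathrm{id}\}$, whence
$$ y\star y = y\circ_1 y + (y\circ_1 y)^{(23)} + y\circ_2 y. $$
I would then use the symmetry $y^{(12)}=y$ together with the operad equivariance axioms to rewrite the last two summands: the outer symmetry of $y$ turns $y\circ_2 y$ into $(y\circ_1 y)^{(123)}$, while the inner symmetry of $y$ turns $(y\circ_1 y)^{(23)}$ into $(y\circ_1 y)^{(132)}$. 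After these substitutions $y\star y$ becomes exactly the Jacobiator $y\circ_1 y+(y\circ_1 y)^{(123)}+(y\circ_1 y)^{(132)}$, so that $[y,y]=0$ is equivalent to the Jacobi relation.

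The main obstacle is precisely this last computation: one must check that the Koszul signs produced by manipulating the odd-degree element $y$ do not spoil the two rewritings. The key point is that the only permutations appearing are the $3$-cycles $(123),(132)$, which are even, and that the hypothesis is $y^{(12)}=y$ rather than $y^{(12)}=-y$; together these ensure that no stray sign survives, so that $y\star y$ is the honest, unsigned Jacobiator (this is the same reason the Jacobi relation for $s\mathrm{Lie}$ takes the same form as for $\mathrm{Lie}$). Once this is verified, the bijection $\phi\mapsto\phi(\xi)$ is manifestly natural in $\O$, which yields the claimed isomorphism.
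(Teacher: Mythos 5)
Your proposal is correct and follows essentially the same route as the paper: both reduce to the presentation of $s\mathrm{Lie}$ by its symmetric degree-$1$ generator, identify the data on the Lie-algebra side as a symmetric degree-$1$, weight-$2$ cycle, and verify by the same explicit computation of the shuffle sets that $y\star y$ equals the Jacobiator, so that $[y,y]=2(y\star y)=0$ is equivalent to the Jacobi relation. The extra care you take with the cycle condition $dy=0$ and the evenness of the $3$-cycles is a welcome (if minor) sharpening of the paper's argument, not a different method.
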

\begin{proof} It follows from the explicit presentation of $s\Lie$ given before that 
$$\Hom_{\mathsf{dgOp}}(s\Lie,\O) = \{ x \in \O(2)_1 | x^{(12)}=x \ \text{and} \ x \circ_1 x + (x \circ_1 x)^{(123)} + (x \circ_1 x)^{(132)} = 0\}$$
so that in order to prove the lemma we are led to show that the Jacobi relation is equivalent to the condition $[x,x]=0$ in $\Li(\O)$. This is done by direct calculation, since for any symmetric $x \in \O(2)$ we have
$$\begin{array}{ccl}
x \star x & = & x \circ_1 x + (x \circ_1 x)^{(23)} + (x \circ_2 x) \\
& = & x \circ_1 x + (x \circ_1 x)^{(123)} + (x \circ_1 x)^{(132)}
\end{array}$$
where we just use the general identities that describe the relationship between partial composition and the action of the symmetric groups. More specifically, take $f \in \O(p)$ and $g \in \O(q)$. Then for every $\s \in S_q$ one has
$$f \circ_i g^{\s} = (f \circ_i g)^{\s'} $$
where $\s' \in S_{p+q-1}$ acts as $\s$ on the block $\{i, i+1, \dots , i+q-1 \}$ and as the identity elsewhere. Moreover, for every $\tau \in S_p$, one has
$$f^{\t} \circ_i g = (f \circ_{\t(i)} g)^{\t'} $$
where $\t' \in S_{p+q-1}$ acts as the identity on the block $\{i, i+1, \dots , i+q-1 \}$ with values in $\{ \t(i), \t(i)+1, \dots , \t(i)+q-1 \}$ and as $\t$ elsewhere (sending $\{1,\dots ,p+q-1 \} \setminus \{ i, \dots , i+q-1 \}$ to $\{1,\dots ,p+q-1 \} \setminus \{ \t(i), \dots , \t(i)+q-1 \}$).

The lemma now follows from the observation that for an element $x \in \Li(\P)$ of degree 1, one has $[x,x] = 2(x \star x)$.
\end{proof}
One immediately has the following consequence.
\begin{cor}
For any dg-operad $\O$, we have
$$\Hom_{\mathsf{dgOp}}(\Lie, \O) \cong \Hom_{\mathsf{dgOp}}(s\Lie, s\O) \cong \Hom_{\mathsf{dgLie^{gr}}}(k[-1](2), \Li (s \O)^{\mathbb S}) \ . $$
\end{cor}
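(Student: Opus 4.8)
The plan is to obtain both displayed isomorphisms by directly chaining the two facts established just above, so that no new computation is required.

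First I would treat the left-hand isomorphism using the structural fact recorded in Section 1: the operadic suspension $s \colon \mathsf{dgOp} \to \mathsf{dgOp}$ is an auto-equivalence, with inverse the desuspension $\O \mapsto s^{-1}\O$. Any equivalence of categories is in particular fully faithful, hence induces a bijection on morphism sets; concretely, a map $\Lie \to \O$ is sent by $s$ to a map $s\Lie \to s\O$, and applying $s^{-1}$ recovers the original map, so the assignment is invertible. This yields $\Hom_{\mathsf{dgOp}}(\Lie, \O) \cong \Hom_{\mathsf{dgOp}}(s\Lie, s\O)$.

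For the right-hand isomorphism I would simply apply the preceding Proposition to the dg-operad $s\O$ in place of $\O$. The Proposition is stated for an arbitrary dg-operad, and $s\O$ is again a dg-operad, so it applies verbatim and gives $\Hom_{\mathsf{dgOp}}(s\Lie, s\O) \cong \Hom_{\mathsf{dgLie^{gr}}}(k[-1](2), \Li(s\O)^{\mathbb S})$. Composing the two isomorphisms produces exactly the chain in the statement.

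Since both ingredients are already in hand, there is no genuine obstacle here; the corollary is essentially a formal consequence. The only points worth keeping in mind are that the suspension functor really is an equivalence—so that it is bijective on Hom-sets, not merely functorial—and that the single hypothesis of the Proposition, namely being a dg-operad, is satisfied by $s\O$ for every $\O$, so that substituting $s\O$ requires no further verification.
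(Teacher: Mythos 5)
Your proposal is correct and matches the paper's intent exactly: the paper states this corollary as an immediate consequence, obtained precisely by combining the fact that the operadic suspension is an auto-equivalence (hence bijective on Hom-sets) with the preceding Proposition applied to $s\O$. No further comment is needed.
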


Next we try to find a result analogous to the last proposition for a cofibrant resolution $\widetilde{s\Lie}_Q$ of the dg-operad $s\Lie$. Our strategy is as follows: given a nice replacement $Q(k[-1](2))$ of $k[-1](2)$ as a graded dg Lie algebra, we find a ``lift'' of this replacement to a cofibrant approximation $\widetilde{s\Lie}_Q$ of the dg operad $s\Lie$. This construction will actually produce a functor from semi-free graded Lie algebras to semi-free operads, but we will not need this functoriality.

Suppose we have a semi-free resolution $Q(k[-1](2))$ of $k[-1](2)$ as a graded dg-Lie algebra. This means that if we forget the differential $Q(k[-1](2))$ is a free graded Lie algebra, say with generators $\{ p_i \}_{i \in I}$, homogeneous of degree $d_i$ and of weight $w_i$. Then there are of course relations $\{r_j\}_{j \in J}$ that can specify the value of $d(p_i)$, where $d$ is the differential. We can now use this resolution to build a dg-operad $\widetilde{s\Lie}_Q$.

Concretely, for every $i \in I$, take a symmetric generator $\widetilde{p_i}$ of arity $w_i$ and of degree $d_i$. As for relations, we take the same relations $r_j$ defining $Q(k[-1](2))$; this means that whenever such a relation $r_j$ contains a bracket $[p_{i_1},p_{i_2}]$, we reinterpret it as the bracket (introduced at the beginning of this section) of elements of an operad $[\widetilde{p}_{i_1}, \widetilde{p}_{i_2}]$, thus getting a relation $\widetilde{r_j}$ for the generators $\widetilde{p}_i$. Let us denote $\widetilde{s\Lie}_Q$ the semi-free operad having all the $\widetilde{p_i}$ as generators and all the $\widetilde{r}_j$ as relations.
The definition of the operad $\widetilde{s\Lie}_Q$ allows us to describe quite naturally the set of morphism $\Hom_{\mathsf{dgOp}}(\widetilde{s\Lie}_Q, \O)$ for an arbitrary operad $\O$.
One can in fact prove the following result.

\begin{prop}\label{resolutions}
Let $Q(k[-1](2))$ be a semi-free resolution of the graded dg-Lie algebra $k[-1](2)$, and let $\widetilde{s\Lie}_Q$ be the operad defined above, which has the same generators and relations of $Q(k[-1](2))$, and such that all generators are symmetric. Then for every dg-operad $\O$ we have
$$\Hom_{\mathsf{dgOp}}(\widetilde{s\Lie}_Q, \O) \cong  \Hom_{\mathsf{dgLie^{gr}}}(Q(k[-1](2)), \Li (\O)^{\mathbb S}) \ . $$
\end{prop}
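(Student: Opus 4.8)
The plan is to build explicit mutually inverse bijections between the two $\Hom$-sets, exploiting that $\widetilde{s\Lie}_Q$ is semi-free as a dg-operad (its underlying graded operad is free on the symmetric generators $\widetilde{p_i}$) and that $Q(k[-1](2))$ is free as a graded Lie algebra (on the generators $p_i$), and then checking that the two resulting differential-compatibility conditions literally coincide. The conceptual input is the functoriality of $\Li$: any operad morphism $\phi\colon \mathcal P \to \O$ commutes with the partial compositions $\circ_i$ and with the symmetric group actions, hence with the pre-Lie product $\star$ and the induced bracket, so it induces a morphism $\Li(\phi)\colon \Li(\mathcal P)\to\Li(\O)$ of graded dg-Lie algebras. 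Since $\phi$ is equivariant it preserves symmetric elements, and by the Lemma the bracket of symmetric elements is again symmetric, so $\Li(\phi)$ restricts to a morphism $\Li(\phi)^{\mathbb S}\colon \Li(\mathcal P)^{\mathbb S}\to\Li(\O)^{\mathbb S}$.

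The first step is to produce a canonical morphism $\eta\colon Q(k[-1](2))\to \Li(\widetilde{s\Lie}_Q)^{\mathbb S}$ in $\mathsf{dgLie^{gr}}$, defined on generators by $p_i\mapsto \widetilde{p_i}$, where $\widetilde{p_i}$ is regarded as an element of $\Li(\widetilde{s\Lie}_Q)$ of weight $w_i$ and cohomological degree $d_i$. Freeness of $Q(k[-1](2))$ as a graded Lie algebra guarantees that this assignment extends uniquely to a morphism of graded Lie algebras; compatibility with the differentials is exactly what the construction of $\widetilde{s\Lie}_Q$ was designed to achieve, since $d(\widetilde{p_i})$ was defined as the operadic reinterpretation $\widetilde{r_i}$ of the Lie word $r_i=d(p_i)$, and $\eta$ sends that same word, now evaluated via the bracket of $\Li(\widetilde{s\Lie}_Q)^{\mathbb S}$, to $\widetilde{r_i}$. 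The forward map then sends an operad morphism $\phi$ to the composite $\Li(\phi)^{\mathbb S}\circ\eta$, a morphism of graded dg-Lie algebras $Q(k[-1](2))\to\Li(\O)^{\mathbb S}$.

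For the inverse, given $\psi\in\Hom_{\mathsf{dgLie^{gr}}}(Q(k[-1](2)),\Li(\O)^{\mathbb S})$, I would set $x_i:=\psi(p_i)$, a symmetric element of $\O(w_i)$ of degree $d_i$, and define $\phi_\psi$ on generators by $\widetilde{p_i}\mapsto x_i$; since the underlying graded operad of $\widetilde{s\Lie}_Q$ is free on the $\widetilde{p_i}$, this extends uniquely to a morphism of graded operads. That $\phi_\psi$ respects differentials follows because $\phi_\psi(d(\widetilde{p_i}))=\widetilde{r_i}(\{x_j\})$, which—using that the operadic bracket on $\Li(\O)$ restricts on symmetric elements to the bracket of $\Li(\O)^{\mathbb S}$ (again the Lemma)—is the Lie word $r_i(\{x_j\})=\psi(r_i)=\psi(d(p_i))=d(x_i)$. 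A comparison on generators shows that $\phi\mapsto\Li(\phi)^{\mathbb S}\circ\eta$ and $\psi\mapsto\phi_\psi$ are mutually inverse.

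The one genuinely delicate point, which is where I would be most careful, is precisely this matching of the two differential relations: that feeding the operadic reinterpretation $\widetilde{r_i}$ of a Lie word through an operad morphism yields the same symmetric element of $\Li(\O)$ as feeding the original Lie word $r_i$ through a morphism of graded Lie algebras. This is where the functoriality of $\Li$ and the identification of the operadic pre-Lie bracket with the abstract Lie bracket on $\Li(\O)^{\mathbb S}$ do the essential work, so that ``reinterpreting brackets as operadic brackets'' commutes with evaluation on the generators; the attendant sign bookkeeping in the bracket identities is routine. Everything else is formal: the reductions to images of generators are the universal properties of semi-free operads and free graded Lie algebras, and naturality in $\O$ is immediate, since both $\Li(-)^{\mathbb S}$ and evaluation on generators are natural. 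This argument is the exact analogue of the proof of the previous proposition, with the Jacobi relation there replaced here by the relations $r_j$ that define the differential of the resolution $Q(k[-1](2))$.
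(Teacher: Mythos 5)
Your proposal is correct and follows essentially the same route as the paper's (much terser) proof: both reduce to the universal property of the semi-free operad and the free graded Lie algebra, and both hinge on the observation that the defining relations of $\widetilde{s\Lie}_Q$, read inside $\Li(\widetilde{s\Lie}_Q)^{\mathbb S}$ via the operadic bracket, literally coincide with the relations of $Q(k[-1](2))$, so that the differential-compatibility conditions on the two sides match. Your version merely makes explicit what the paper leaves implicit (the universal morphism $\eta$, the functoriality of $\Li$, and the equivariance argument showing images of generators land in the symmetric part).
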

\begin{proof} This follows from the definition of $\widetilde{s\Lie}_Q$ in terms of generators and relations. 
Just like what we said before for $s\Lie$, morphisms form $\widetilde{s\Lie}_Q$ are completely determined by the images of the generators $\widetilde{p_i}$, provided that they satisfy the relations defining $\widetilde{s\Lie}_Q$.
Every relation can be expressed inside $\Li(\widetilde{s\Lie}_Q)$, since they only specify the differentials of the $\widetilde{p_i}$ in terms of their brackets. And by definition these relations of course coincide with those of $Q(k[-1](2))$, giving the desired result.
\end{proof}

In particular, we observe that $\widetilde{s\Lie}_Q$ is a cofibrant approximation of $s\Lie$: the weak equivalence $\widetilde{s\Lie}_Q \to s\Lie$ is induced by the weak equivalence $Q(k[-1](2)) \to k[-1](2)$. The fact that it is cofibrant follows from the definition of cofibrations in the model category of dg-operads, given in \cite{Hi}.

The operad $\widetilde{s\Lie}_Q$ is therefore weakly equivalent to any cofibrant replacement of $s\Lie$. This is just a consequence of the existence of the dotted arrow in the following commutative diagram
\begin{center}
\begin{tikzpicture}[description/.style={fill=white,inner sep=5pt}] 
\matrix (m) [matrix of math nodes, row sep=3em, 
column sep=3.5em, text height=2ex, text depth=0.25ex] 
{\emptyset & (s\Lie)_{\infty} \\
\widetilde{s\Lie}_Q  & s\Lie \\};
\path[->,font=\scriptsize] 
(m-1-1) edge node[auto] {$ $} (m-1-2)
(m-1-1) edge node[auto] {$ $} (m-2-1)
(m-1-2) edge node[auto] {$ $} (m-2-2)
(m-2-1) edge node[auto] {$ $} (m-2-2);
\draw[dotted,->] (m-2-1) -- (m-1-2);
\end{tikzpicture} 
\end{center}

Since the operadic suspension preserves weak equivalences and fibrations, the map $s(\Lie_{\infty}) \to s\Lie$ is a trivial fibration, where $\Lie_{\infty}$ is the standard minimal model of the operad $\Lie$, studied for example by Markl in \cite{Mar}.
In particular, it follows that $\widetilde{s\Lie}_Q$ is weakly equivalent to $s(\Lie_{\infty})$. Once again this is just a consequence of the existence of a model category structure on $\mathsf{dgOp}$, which assures that the dotted arrow in the following diagram
\begin{center}
\begin{tikzpicture}[description/.style={fill=white,inner sep=5pt}] 
\matrix (m) [matrix of math nodes, row sep=3em, 
column sep=3.5em, text height=2ex, text depth=0.25ex] 
{\emptyset & s(\Lie_{\infty}) \\
\widetilde{s\Lie}_Q  & s\Lie \\};
\path[->,font=\scriptsize] 
(m-1-1) edge node[auto] {$ $} (m-1-2)
(m-1-1) edge node[auto] {$ $} (m-2-1)
(m-1-2) edge node[auto] {$ $} (m-2-2)
(m-2-1) edge node[auto] {$ $} (m-2-2);
\draw[dotted,->] (m-2-1) -- (m-1-2);
\end{tikzpicture} 
\end{center}
exists, and that it is a weak equivalence.

Note that this does not imply that 
$$\Hom_{\mathsf{dgOp}}(\Lie_{\infty}, \O) \cong \Hom_{\mathsf{dgOp}}(s\Lie_{\infty}, s\O) \cong \Hom_{\mathsf{dgLie^{gr}}}(Q(k[-1](2)), \Li (s \O)^{\mathbb S})$$
since $\widetilde{s\Lie}_Q$ and $s\Lie_{\infty} $ are not isomorphic in general.

\subsection{Derivations and multi-derivations}

Our goal now is to define shifted Poisson brackets on a commutative algebra, and hence we need to understand derivations of a commutative algebra in an operadic way. 

Recall that for a commutative dg-algebra $A$ we have a standard notion of \emph{multi-derivation}. Namely one says that a linear map $\phi : A^{\otimes p} \to A$ is a multi-derivation if for every $i=1,\dots p$ and for every choice of $a_1,\dots,\hat{a_i}, \dots a_p \in A$ the induced linear map
$$\begin{array}{ccl} A& \longrightarrow& A \\
x &\longmapsto &\phi(a_1,\dots, a_{i-1},x,a_{i+1},\dots a_p)
\end{array}$$
is a (graded) derivation of $A$.
More generally, for every operadic morphism $\mu  : \Comm \to \O$, we can say what it means for any element of $\O$ to be a \emph{derivation} with respect to $\mu$. Notice that the map $\mu$ is completely determined by the image in $\O(2)$ of the generator of the operad $\Comm$; in order to simplify the notation, we will also use the letter $\mu$ to denote the image of the generator.
\begin{defi}\label{multider}  Let $\O$ be a dg-operad, and let $\mu :  \Comm \to \O$ be a morphism of dg-operads. Suppose $f \in \O(p)$ is an element of $\O$ of arity $p \in \N$. We say that $f$ is a \emph{$p$-derivation with respect to $\mu$} if we have
$$f \circ_i \mu = (\mu \circ_1 f)^{(p+1 \ p\ \dots\ i+2 \ i+1)} + (\mu \circ_2 f)^{(1 \ 2 \ \dots \ i-1 \ i)} $$
for every $i=1, \dots, p$. The symmetric sub-sequence of $\O$ formed by $p$-derivations will be denoted by $\mathcal{MD}(\O,\mu)$, and its elements will just be called \emph{multi-derivations} with respect to $\mu$. If the morphism $\mu$ is clear from the context, we will just write $\MD(\O)$.
\end{defi}

The definition is coherent with the classical case of derivations of an algebra: if $ \O $ is the endomorphism operad of a dg-module $V$ and $\mu$ is an actual commutative product on $V$ (so that $(V, \mu)$ is just a commutative dg-algebra), then multi-derivations in our sense are exactly multi-derivations in the standard sense.

Let us remark that one could give a definition analogous to Definition \ref{multider} that works for every element $\mu \in \O(2)$, of any degree, and without making any assumption on the symmetry of $\mu$. For example, a derivation with respect to such a $\mu$ is just an element $f \in \O(1)$ such that
$$f \circ \mu = (-1)^{ |\mu| |f| } \mu \circ_1 f + (-1)^{ |\mu| |f| } \mu \circ_2 f \ .$$
In order to generalize this to multi-derivations, one should keep track of the signs.
\begin{defi}\label{multider2} Let $\O$ be a dg-operad, and let $\mu \in \O(2)$. An element $f \in \O(p)$ is called a $p$-\emph{derivation with respect to} $\mu$ if for every $i=1,2,\dots,p$ we have
$$f \circ_i \mu = (-1)^{|\mu| |f|}(\mu \circ_1 f)^{(i+1\ p+1 \ p\ \dots\ i+2)} + (-1)^{|\mu| |f|}(\mu \circ_2 f)^{(1\ 2\ \dots\ i-1 \ i )}$$
\end{defi}

The dg-module of derivations of an algebra $A$ is known to be a dg-Lie algebra in a natural way: the (graded) commutator of two derivations is in fact still a derivation. Derivations thus form a sub-Lie algebra of $\Hom_{\mathsf{dgMod}}(A,A)$. More can be said, since actually the graded module of multi-derivations of $A$ is a graded sub-Lie algebra of $\Li(\mathrm{End}_A)$. The following lemma tells us that the same remains true in the world of operads.
\begin{prop} Let $\O$ be a dg-operad, and let $\mu \in  \O(2)$ be a binary operation. The (graded module associated to the) symmetric sequence of multi-derivations with respect to $\mu$ of Definition \ref{multider2} is closed under the Lie bracket of $\Li(\O)$.
\end{prop}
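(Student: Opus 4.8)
The plan is to verify directly that the bracket $h := [f,g] = f\star g - (-1)^{|f||g|}\, g\star f$ of a $p$-derivation $f$ and a $q$-derivation $g$ again satisfies Definition \ref{multider2}, i.e. that $h\circ_k\mu$ has the prescribed Leibniz form for every $k=1,\dots,p+q-1$. The point to keep in mind from the outset is that one cannot expect the individual summands $f\circ_i g$ (let alone their shuffles) to be multi-derivations: inserting one multi-derivation into another produces, roughly speaking, a ``second order'' object, exactly as the composite of two vector fields on a manifold is a second order differential operator rather than a derivation. As in the classical situation, the first order (derivation) behaviour is recovered only after antisymmetrizing, and it is precisely the commutator that does this.

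Concretely, I would expand $h\circ_k\mu = (f\star g)\circ_k\mu - (-1)^{|f||g|}(g\star f)\circ_k\mu$ and analyse each half by pushing the insertion $\circ_k\mu$ through the shuffles appearing in the pre-Lie product, using the two interchange identities between partial composition and the $S_n$-action recalled earlier in this section. For a fixed shuffled term, say coming from $f\circ_i g$, the inserted slot $k$ either lands in the ``outer'' factor $f$ or inside the ``inner'' block occupied by $g$. In the first case the derivation property of $f$ applies cleanly and reproduces genuine Leibniz terms for $h$. In the second case both the derivation property of $g$ (in the block) and that of $f$ (at its $i$-th slot, now split by $\mu$) are triggered; besides the desired Leibniz terms this yields ``mixed'' terms, in which the two new inputs created by $\mu$ are fed one to $f$ and one to $g$.

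The heart of the argument, and the step I expect to be the main obstacle, is to show that these mixed terms cancel. One has to check that every mixed monomial produced by $(f\star g)\circ_k\mu$ occurs once in $(g\star f)\circ_k\mu$ with the same permutation of the inputs and with a sign differing exactly by $(-1)^{|f||g|}$, so that the two contributions cancel in the commutator; what survives then reassembles precisely into the right-hand side of Definition \ref{multider2} for $h$. This is a purely combinatorial bookkeeping of the shuffle sets $S^i_{p,q}$ and of the cyclic permutations occurring in Definition \ref{multider2}, and no hypothesis on $\mu$ (symmetry, associativity, or degree) is needed: only the derivation property of $f$ and $g$ and the defining identities of the pre-Lie product enter. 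The cancellation is most transparent in the case $\O=\mathrm{End}_A$ of an endomorphism operad, where it is literally the statement that the second order symbol of the composite $D_f D_g$ is symmetric in $f,g$ and hence disappears from $[D_f,D_g]$; the general operadic statement is then obtained by carrying out the very same manipulation formally with the operadic composition maps and the $S_n$-action.
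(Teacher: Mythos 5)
Your argument is correct and takes essentially the same route as the paper's own proof: expand $[f,g]\circ_i \mu$ through the pre-Lie shuffles, split according to whether the inserted $\mu$ lands in a slot of the outer factor (where the Leibniz rule applies cleanly) or in the block occupied by the inner factor, and show that the resulting mixed terms of the form $\mu \circ (f,g)$ produced by $(f\star g)\circ_i\mu$ and $(g\star f)\circ_i\mu$ match up to the sign $(-1)^{|f||g|}$ and hence cancel in the commutator. The paper leaves this final combinatorial matching at the same level of detail you do, asserting that both sides enumerate all possible products of the form $\mu\circ(f,g)$.
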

\begin{proof}
This follows from a straightforward computation: let us give the main ideas without going into all the details. We will suppose that $\mu$ is of even degree in order to avoid keeping track of too many signs. The proof for $\mu$ of odd degree is exactly the same, with additional signs of course.

Let $f \in \O(p)$ and $g \in \O(q)$ be two multi-derivations with respect to $\mu$. We have
$$[f,g] \circ_i \mu = (f \star g) \circ_i \mu + (-1)^{|f||g|}(g \star f) \circ_i \mu 	\ ,$$
and we would like to show that this is equal to 
\begin{align*}
(\mu \circ_1 [f,g])^{(p+q \ \dots \ i+1)} + (\mu \circ_2 [f,g])^{(1\ \dots \ i)}
& = (\mu \circ_1 (f\star g))^{(p+q \ \dots \ i+1)} +  \\
& \  + (-1)^{|f||g|}(\mu \circ_1 (g\star f))^{(p+q \ \dots \ i+1)} + \\ & \ +(\mu \circ_2 (f\star g))^{(1 \ \dots \ i)}\\
&\  + (-1)^{|f||g|}(\mu \circ_2 (g\star f))^{(1 \ \dots \ i)}
\end{align*}
Notice that just as with composition of vector fields, $f \star g$ and $g \star f$ have no hope of being multi-derivations themselves, and one really has to develop the sums in order to prove the result.
Using the relations between partial compositions and the action of the symmetric groups, we may write
$$(f \star g) \circ_i \mu  = \sum_{j=1}^p \sum_{ \s \in S_{p,q}^j} (f \circ_j g)^{\s} \circ_{i} \mu 
=  \sum_{j=1}^p \sum_{ \s \in S_{p,q}^j} ((f \circ_j g) \circ_{\s(i)} \mu)^{\s'} \ .$$
We now observe that if $\s(i) \notin \{j,j+1, \dots , j+q-1\}$, then we can just use the fact that $f$ is a derivation, and we are done. A similar reasoning applies to $(g\star f)\circ_i \mu$.
When $\s(i) \in \{j,j+1, \dots , j+q-1\}$, it gets a bit more complicated. With some care, we can write down what it is left to prove, that is
\begin{equation*}
\setlength{\jot}{-13pt}
\begin{split}
\sum_{j=1}^p \sum_{\substack{ \s \in S_{p,q}^j \\ j\leq \s(i) < j+q }}  & ((\mu\circ_2 f ) \circ_1 g )^{\varphi \cdot ( j+q \ j+q-1 \ \dots \ \s(i)+1 )\cdot \s'} = \\
&\quad \hspace{22pt} =  (-1)^{|f||g|}\sum_{k=1}^q \sum_{\substack{ \t \in S_{q,p}^k \\ k\leq \t(i) < k+p }} ((\mu \circ_1 g ) \circ_{q+1} f )^{\psi \cdot (k\ k+1\ \dots \ \t(i))\cdot \t'}
\end{split}
\end{equation*}
where $\varphi \in S_{p+q}$ is the permutation that exchanges the blocks $\{1,\dots,j-1\}$ and $\{j,\dots, j+q-1\}$, and $\psi \in S_{p+q}$ is the permutation that exchanges the blocks $\{k+1,\dots, k+p\}$ and $\{k+p+1,\dots,p+q\}$. This last equation is true by direct verification: both sides are equal to the sum of all possible ``products'' of the form $\mu \circ ( f, g)$.


\end{proof}

\subsection{The operad $\widetilde{P}_{n,Q}$}

Recall (see Section 8.6 of \cite{LV} and references therein) that given two operads $\P$ and $\Q$, if we choose a morphism of symmetric sequences $\Lambda : \Q \circ \P \to \P \circ \Q$ (satisfying a series of axioms), then we can put an operad structure on the composite of the underlying symmetric sequences $\P \circ \Q$. The idea is that in order to define a composition $(\P \circ \Q) \circ (\P \circ \Q) \to \P \circ \Q$, we can use the morphism $\Lambda$ followed by the given compositions $\P \circ \P \to \P$ and $\Q \circ \Q \to \Q$, coming from the operad structures on $\P$ and $\Q$.
Informally speaking, $\Lambda$ specifies how the operations encoded by the operad $\P$ interact with those encoded by $\Q$. Such a $\Lambda$ is called a \emph{distributive law}, because of the motivating example of the relation between the sum and the multiplication in a ring. When $\P$ and $\Q$ have a nice presentation in terms of generators and relations, we only need a \emph{rewriting rule} for the generators (we refer again to Section 8.6 of \cite{LV} for more details).

We now let $Q(k[-1](2))$ be again a semi-free resolution of the graded dg-Lie algebra $k[-1](2)$: as before, we can associate to it an operad $\widetilde{s\Lie}_Q$, which is quasi-isomorphic to $s\Lie_{\infty}$. 
The operad introduced in the following definition will play a central role in the remainder of the paper.
\begin{defi}
Let $Q(k[-1](2))$ be again a semi-free resolution of the graded dg-Lie algebra $k[-1](2)$, and let as before $\widetilde{s\Lie}_Q$ be the operad of Proposition \ref{resolutions}. We define the operad $\widetilde{P}_{n,Q}$ to be the operad obtained by means of a rewriting rule out of $s^{-n}\widetilde{s\Lie}_Q$ and $\mathrm{Comm}$, imposing the condition that every generator of $s^{-n}\widetilde{s\mathrm{Lie}}_Q$ is a multi-derivation with respect to the generator of $\mathrm{Comm}$.
Explicitly, if we denote by $s^{-n}\widetilde{p}_i$ the generators of $s^{-n}\widetilde{s\mathrm{Lie}}_Q$ and by $\mu$ the generator of $\Comm$, the rewriting rule sends $s^{-n}\widetilde{p}_i \circ_k \mu$ to $(\mu \circ_1 f)^{(k+1\ p+1 \ p\ \dots\ i+2)} + (\mu \circ_2 f)^{(1\ 2\ \dots\ k-1 \ k )}$.
\end{defi}
It is clear from the definition that $\widetilde{P}_{n,Q}$-algebras are commutative dg-algebras $A$ with a compatible $\widetilde{s\Lie}_Q$-structure on $A[n]$, where the compatibility is given by the condition that the operations defining the $\widetilde{s\Lie}_Q$-structure must be multi-derivations of the commutative dg algebra $A$.
This operad is obviously weakly equivalent to 
the dg-operad obtained in a similar way out of $\Comm$ and $s\Lie_{\infty}$ (recall that with $\Lie_{\infty}$ we mean the minimal model of the operad $\Lie$). Let us call $\widehat{P}_n$ this latter operad. More specifically, $\widehat{P}_n$-algebras are commutative dg-algebras $A$ together with a $\Lie_{\infty}$-structure on $A[n-1]$. The two structures are compatible, meaning that the multi-brackets defining the (shifted) $\Lie_{\infty}$-structure are multi-derivations on the algebra $A$.

\begin{rem}
The operad $\widehat{P}_n$ (actually a non-shifted version of it) has appeared for instance in \cite{CF}, where the authors called its algebras \emph{flat $P_{\infty}$-algebras}. However, as Cattaneo and Felder correctly remarked in their paper, their notation is a bit misleading, because $\widehat{P}_n$ is not a cofibrant replacement of the operad $P_n$: in particular the product encoded in $\widehat{P}_n$ is strictly commutative. One could see $\widehat{P}_n$ as an operad standing between the original $P_n$ and its minimal model $P_{n,\infty}$. For this reason, algebras for $\widehat{P}_n$ will be called \emph{semi-strict $P_n$-algebras}. For an explicit definition of $P_{n,\infty}$-algebras in term of generators and relations in the case $n=2$ (corresponding to homotopy Gerstenhaber algebras), one can look at \cite{Gi}.
\end{rem}

By construction, the operad $\widetilde{P}_{n,Q}$ has a natural map from the commutative dg-operad $\Comm$. If $\O$ is any dg-operad, we now describe the fiber of the induced morphism
$\Hom_{\mathsf{dgOp}}(\widetilde{P}_{n,Q},\O) \to \Hom_{\mathsf{dgOp}}(\Comm, \O) $ at a point $\mu$. In particular, if we take $\O$ to be the endomorphism operad of a dg-module $V$, we are studying the possible ways in which a given commutative structure on $V$ can be extended to a $\widetilde{P}_{n,Q}$-structure. From the very definition of $\widetilde{P}_{n,Q}$, it is clear that what we are missing is a shifted $\widetilde{s\Lie}_{Q}$-structure made out of multi-derivations. Luckily the preceding results give us exactly a way to compute those structures.
\begin{prop}\label{set-fib}
 Let $\O$ be a dg-operad, and let $\mu : \Comm \to \O$ be a map of operads. The fiber at $\mu$ of the map
$$\mathrm{Hom}_{\mathsf{dgOp}}(\widetilde{P}_{n,Q},\O) \to \mathrm{Hom}_{\mathsf{dgOp}}(\Comm, \O) $$
is the set $\Hom_{\mathsf{dgLie^{gr}}}(Q(k[-1](2)), \Li(s^{n} \MD(\O))^{\mathbb S})$ .

\end{prop}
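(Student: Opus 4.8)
The plan is to read off the fiber directly from the generators-and-relations presentation of $\widetilde{P}_{n,Q}$ and then to repackage the resulting data as a morphism of graded dg-Lie algebras, exactly as in Proposition \ref{resolutions} but twisted by the operadic suspension and restricted to multi-derivations. First I would unwind what a point of the fiber is. By construction $\widetilde{P}_{n,Q}$ is obtained from $\Comm$ and $s^{-n}\widetilde{s\Lie}_Q$ by a rewriting rule, so a morphism $\widetilde{P}_{n,Q} \to \O$ is the same datum as a morphism $\Comm \to \O$ together with a morphism $s^{-n}\widetilde{s\Lie}_Q \to \O$ whose generators satisfy the compatibility imposed by the rewriting rule. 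Fixing the first factor to be $\mu$, a point of the fiber is therefore exactly a choice, for every generator $s^{-n}\widetilde{p}_i$ of arity $w_i$, of an image $f_i \in \O(w_i)$ such that the $f_i$ satisfy the (desuspended) relations $s^{-n}\widetilde{r}_j$ and such that each $f_i$ is a multi-derivation with respect to $\mu$ in the sense of Definition \ref{multider2} — the latter being precisely the content of the rewriting rule.

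Next I would translate this into Lie-theoretic data. The constraint that $f_i$ be a multi-derivation is most naturally recorded inside the symmetric sub-sequence $\MD(\O) = \MD(\O,\mu) \subseteq \O$. By the previous proposition $\MD(\O)$ is closed under the Lie bracket of $\Li(\O)$, and it is manifestly closed under the differential; hence $\Li(s^{n}\MD(\O))^{\mathbb S}$ is a graded sub-dg-Lie-algebra of $\Li(s^{n}\O)^{\mathbb S}$, which is what makes the right-hand side of the statement meaningful in the first place. Since the operadic suspension is an auto-equivalence of $\mathsf{dgOp}$, the assignment $f_i \mapsto s^{n} f_i$ sends a multi-derivation of arity $w_i$ to an element of $s^{n}\MD(\O)(w_i)$ of the prescribed degree and weight, and — by the same sign computation that turns the (anti)symmetry of a desuspended generator into honest symmetry after re-suspension — it lands in the invariants $\Li(s^{n}\MD(\O))^{\mathbb S}$. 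This sets up a bijection between the admissible families $(f_i)_i$ and set-maps sending each free generator $p_i$ of $Q(k[-1](2))$ to the symmetric element $s^{n} f_i$.

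It then remains to match the relations, which is where Proposition \ref{resolutions} enters. The relations $\widetilde{r}_j$ of $\widetilde{s\Lie}_Q$ were defined to mirror the defining relations $r_j$ of $Q(k[-1](2))$, reinterpreting each bracket $[p_{i_1},p_{i_2}]$ of generators as the operadic bracket $[\widetilde{p}_{i_1},\widetilde{p}_{i_2}]$ in $\Li$; since these relations only express the differential of a generator in terms of brackets, the desuspended relations $s^{-n}\widetilde{r}_j$ hold in $\O$ if and only if the assignment $p_i \mapsto s^{n} f_i$ is compatible with differentials and brackets, i.e. extends to a morphism of graded dg-Lie algebras $Q(k[-1](2)) \to \Li(s^{n}\MD(\O))^{\mathbb S}$. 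Composing the two bijections yields the claimed identification of the fiber with $\Hom_{\mathsf{dgLie^{gr}}}(Q(k[-1](2)), \Li(s^{n}\MD(\O))^{\mathbb S})$. I expect the main obstacle to be the suspension bookkeeping in the second paragraph: one has to check, degree by degree and sign by sign, that desuspending by $s^{-n}$ and re-suspending by $s^{n}$ carries Definition \ref{multider2} and the operadic bracket to exactly the structure of $\Li(s^{n}\MD(\O))^{\mathbb S}$, and in particular that symmetry versus anti-symmetry of generators is correctly exchanged with membership in the $\mathbb S$-invariants. This is the same verification already performed in the proof of Proposition \ref{resolutions}, now carried out relative to the bracket-closed sub-sequence $\MD(\O)$ rather than the full operad $\O$.
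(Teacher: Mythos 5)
Your proposal is correct and follows essentially the same route as the paper: unwind a point of the fiber as a morphism $s^{-n}\widetilde{s\Lie}_Q \to \O$ sending generators to multi-derivations, reinterpret it as a morphism $\widetilde{s\Lie}_Q \to s^n\O$ landing in $s^n\MD(\O)$, and conclude by Proposition \ref{resolutions}. The one point you defer --- that operadic suspension carries multi-derivations with respect to $\mu$ to multi-derivations with respect to $s\mu$, so that $\Li(s^n\MD(\O))^{\mathbb S}$ really is a sub-Lie algebra --- is not literally the verification of Proposition \ref{resolutions} but a separate sign computation, which is exactly the one the paper carries out explicitly in its proof.
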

\begin{proof} By definition of the operad $\widetilde{P}_{n,Q}$, the fiber we are trying to compute is a subset of $\Hom_{\mathsf{dgOp}}(\widetilde{s\Lie}_{Q}, s^{n}\O)$: in fact, it is composed of morphisms $s^{-n}\widetilde{s\Lie}_{Q} \to \O$. The condition they must satisfy is that the image of the generators must be multi-derivations with respect to $\mu$. It follows that our fiber is the subset of maps $\widetilde{s\Lie}_{Q} \to s^n\O$ which send generators to suspensions of multi-derivations.
Using Proposition \ref{resolutions}, we thus get that the fiber is exactly $\Hom_{\mathsf{dgLie^{gr}}}(Q(k[-1](2)), \Li(s^{n} \MD(\O))^{\mathbb S})$. 
Notice that it may seem that we are being a bit inaccurate here, as it is not entirely obvious that the (operadic) suspensions of elements of the sub-Lie algebra $\MD(\O)$ are still a sub-Lie algebra of $\Li(s^n\O)$. This is nonetheless true, and it follows from the observation that elements in $s^{n}\MD(\O)$ are exactly multi-derivations with respect to the $n$-suspension of the commutative product $\mu$. To see this, take $f$ a multi-derivation of $\O$ of arity $p$. We want to show that image under the operadic suspension of $f$ is a multi-derivation of $s\O$ with respect to the suspension of $\mu$. This would easily imply our claim, and therefore the theorem.

Recall that the component of arity $p$ of $s\O$ is $\O(p) \otimes k[1-p]$, where $k[1-p]$ is the signature representation of $S_p$ put in degree $p-1$. We denote the generator of $k[1-p]$ by $x_{p-1}$, so that $|x_{p-1}|=p-1$. By definition of the compositions in $s\O$, we have
\begin{align*}
(f\otimes x_{p-1}) \circ_i (\mu \otimes x_{1}) & =  (f \circ_i \mu ) \otimes (x_{p-1}\circ_i x_1) \\
& =  (\mu \circ_1 f)^{(i+1\ p+1 \dots i+2)}\otimes (x_{p-1}\circ_i x_1 )+  \\
& \ \ \ + (\mu \circ_1 f)^{(i\ p+1 \dots i+1)}\otimes (x_{p-1}\circ_i x_1) \\
& =  (-1)^{p-i}((\mu \circ_1 f) \otimes (x_{p-1} \circ_ix_1))^{(i+1\ p+1 \dots i+2)}+ \\
& \ \ \ + (-1)^{p+1-i}((\mu \circ_1 f) \otimes (x_{p-1}\circ_i x_1))^{(i\ p+1 \dots i+1)}
\end{align*}
Now observe that
\begin{align*}(\mu \circ_1 f)\otimes (x_{p-1} \circ_i x_1) & = (-1)^{i-1}(\mu \circ_1 f)\otimes (x_{1}\circ_1 x_{p-1}) \\
& =  (-1)^{i-1}(-1)^{|f|}(\mu \otimes x_1) \circ_1 (f \otimes x_{p-1})
\end{align*}
so that we have
\begin{align*}
(f\otimes x_{p-1}) \circ_i (\mu \otimes x_{1})  =  & \ (-1)^{p-1}(-1)^{|f|}((\mu \otimes x_1) \circ_1 (f \otimes x_{p-1}))^{(i+1\ p+1 \dots i+2)}\ - \\
 & \ - (-1)^{p-1}(-1)^{|f|}((\mu \otimes x_1) \circ_1 (f \otimes x_{p-1}))^{(i\ p+1 \dots i+1)}
\end{align*}
which tells us exactly that $f \otimes x_{p-1}$ is a multi-derivation with respect to the binary operation $\mu \otimes x_1$.
\end{proof}

\subsection{The moduli space of $\widetilde{P}_{n,Q}$-structures}

We are now ready to prove our first main result. Given two dg-operads $\P$ and $\Q$, one can form a simplicial space of morphisms from $\P$ to $\Q$, which we will denote by $\underline{\Hom}_{\mathsf{dgOp}}(\P,\Q)$. Namely, we can construct a simplicial resolution $\Q_{\bullet}$ of $\Q$ and consider the simplicial set whose n-simplices are $\Hom(\P, \Q_n)$ and whose face and degeneracy maps are the ones induced by the simplicial structure of $\Q_{\bullet}$. Notice that this is not the derived mapping space between $\P$ and $\Q$ in the model category of dg-operads, since we are not replacing $\P$ with a cofibrant model. If the operad $\P$ is cofibrant, then $\underline{\Hom}_{\mathsf{dgOp}}(\P,\Q)$ is isomorphic to the mapping space $\mathrm{Map}_{\mathsf{dgOp}}(\P,\Q)$ in the homotopy category of simplicial sets.

The simplicial set $\underline{\Hom}_{\mathsf{dgOp}}(\P,\Q)$ has a nice interpretation if we put $\Q = \mathrm{End}_V$, where $V$ is a dg-module. In this case, $\underline{\Hom}_{\mathsf{dgOp}}(\P,\mathrm{End}_V)$ can be thought of as a sort of moduli space of $\P$-algebra structures on $V$.

We can ask whether Proposition \ref{set-fib} remains true at the level of simplicial sets. First of all, we remark that the question makes sense: for every operad $\O$, we have a naturally induced map
$\underline{\mathrm{Hom}}_{\mathsf{dgOp}}(\widetilde{P}_{n,Q}, \O) \to \underline{\mathrm{Hom}}_{\mathsf{dgOp}}(\Comm, \O)$ (induced by the natural morphism of operads $\Comm \to \widetilde{P}_{n,Q}$) that forgets the additional structure, and we could wonder if we can describe the fiber of a 0-simplex $\mu \in \underline{\mathrm{Hom}}_{\mathsf{dgOp}}(\Comm, \O)$ in terms of some simplicial set of morphisms in the category $\mathsf{dgLie^{gr}}$. The following theorem answers this question affirmatively.

\begin{teo}\label{thm1}
 Let $\O$ be a dg-operad, and let $\mu : \Comm \to \O$ be a map of operads. The (strict) fiber at $\mu$ of the morphism of simplicial sets
$$\underline{\mathrm{Hom}}_{\mathsf{dgOp}}(\widetilde{P}_{n,Q},\O) \to \underline{\mathrm{Hom}}_{\mathsf{dgOp}}(\Comm, \O) $$
is the simplicial set $\mathrm{\Hom}_{\mathsf{dgLie^{gr}}}(Q(k[-1](2)), \Li(s^{n}\MD(\O))^{\mathbb S} \otimes \Om_*)$, which is a right homotopy function complex from $k[-1](2)$ to $\Li(s^{n}\MD(\O))^{\mathbb S}$ in the model category of graded dg-Lie algebras.
\end{teo}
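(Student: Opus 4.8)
The plan is to reduce the simplicial statement to the set-level computation of Proposition \ref{set-fib}, applied degreewise to an explicit simplicial resolution of $\O$. Recall that $\underline{\Hom}_{\mathsf{dgOp}}(\P,\O)$ is computed by choosing a simplicial resolution $\O_\bullet$ of $\O$; I would take the standard one $\O_n = \O\otimes\Om_n$, where $\Om_*$ is the simplicial commutative dg-algebra of polynomial de Rham forms on the simplices (so $\Om_n = \Omega^\bullet(\Delta^n)$, the weak equivalence $\Om_n\to k$ making $\O\otimes\Om_*$ a Reedy-fibrant simplicial resolution, the arity-wise tensor being taken against the trivial $S_m$-action on $\Om_n$). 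With this choice the set of $n$-simplices of the total space is $\Hom_{\mathsf{dgOp}}(\widetilde{P}_{n,Q},\O\otimes\Om_n)$, and the distinguished $0$-simplex $\mu$ propagates to the degenerate family $\mu_n := \mu\otimes 1 \colon \Comm \to \O\otimes\Om_n$ via the unit $k\to\Om_n$.

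First I would compute the strict fiber in each simplicial degree. By construction the fiber of $\Hom(\widetilde{P}_{n,Q},\O\otimes\Om_n)\to\Hom(\Comm,\O\otimes\Om_n)$ over $\mu_n$ is exactly the set-level fiber of Proposition \ref{set-fib} for the operad $\O\otimes\Om_n$ equipped with the commutative structure $\mu_n$, so that proposition identifies it with $\Hom_{\mathsf{dgLie^{gr}}}(Q(k[-1](2)),\Li(s^n\MD(\O\otimes\Om_n))^{\mathbb S})$. It thus remains to produce a natural isomorphism of graded dg-Lie algebras
\[
\Li(s^n\MD(\O\otimes\Om_n))^{\mathbb S} \;\cong\; \Li(s^n\MD(\O))^{\mathbb S}\otimes\Om_n .
\]

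The core of the argument is that each of the four operations $\MD$, operadic suspension $s^n$, $\Li$ and $(-)^{\mathbb S}$ commutes with the arity-wise functor $-\otimes\Om_n$. Crucially, because we take the fiber over the \emph{degenerate} point $\mu_n=\mu\otimes 1$ rather than over an arbitrary commutative structure, the multi-derivation condition of Definition \ref{multider2} decouples: writing $\sum_k f_k\otimes\omega_k$ with the $\omega_k$ linearly independent and tensoring the condition through, the Koszul sign $(-1)^{|\mu||\omega_k|}$ appears on both sides and cancels, so that such an element is a multi-derivation with respect to $\mu\otimes 1$ iff each $f_k$ is one with respect to $\mu$; hence $\MD(\O\otimes\Om_n)=\MD(\O)\otimes\Om_n$. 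Next, operadic suspension is the arity-wise Hadamard tensor with $\mathrm{End}_{k[n]}$ and so commutes with $-\otimes\Om_n$; the functor $\Li=\bigoplus_m$ commutes with tensoring by the fixed object $\Om_n$; and since $S_m$ acts trivially on $\Om_n$, taking invariants commutes with $-\otimes\Om_n$ as well (using $\mathrm{char}\,k=0$, or simply flatness over a field). Assembling these gives the identification of graded dg-modules. I expect the genuine obstacle to be the compatibility of brackets: one must verify that the pre-Lie product recalled at the start of Section 2, computed on $\Li(s^n(\O\otimes\Om_n))$ from the composition of the tensored operad $(s^n\O)\otimes\Om_n$, is exactly the $\Om_n$-bilinear extension of the pre-Lie product on $\Li(s^n\O)$, carefully tracking the Koszul signs produced when the form-components are permuted past the operadic inputs. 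Restricting this identity to the sub-Lie algebras of symmetric multi-derivations then yields the displayed isomorphism as graded dg-Lie algebras.

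Finally I would check that these degreewise bijections are compatible with faces and degeneracies — this is automatic from the naturality of Proposition \ref{set-fib} and of the four commutation isomorphisms in the simplicial variable — so that they assemble into an isomorphism of simplicial sets onto $\Hom_{\mathsf{dgLie^{gr}}}(Q(k[-1](2)),\Li(s^n\MD(\O))^{\mathbb S}\otimes\Om_*)$. To conclude, I would invoke that $\Om_*$ exhibits $L\otimes\Om_*$ as a simplicial resolution of any graded dg-Lie algebra $L$ (here $L=\Li(s^n\MD(\O))^{\mathbb S}$) in $\mathsf{dgLie^{gr}}$, and that $Q(k[-1](2))$ is by hypothesis a (semi-free, hence cofibrant) resolution of $k[-1](2)$; together these two data are precisely what it means for the resulting simplicial set to be a right homotopy function complex from $k[-1](2)$ to $\Li(s^n\MD(\O))^{\mathbb S}$.
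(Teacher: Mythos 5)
Your proposal is correct and follows essentially the same route as the paper: compute the strict fiber degreewise by applying Proposition \ref{set-fib} to $\O\otimes\Om_m$ at the degenerate point $\mu$, identify $\MD(\O\otimes\Om_m)$ with $\MD(\O)\otimes\Om_m$ and commute the suspension, $\Li$ and invariants functors with extension of scalars, then assemble the degreewise bijections into the stated right homotopy function complex. The extra care you take with the decoupling of the multi-derivation condition and with the Koszul signs in the bracket comparison is detail the paper leaves implicit, but it is the same argument.
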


Before proving the theorem, we give explicit ways to compute simplicial resolutions and mapping spaces in both model categories $\mathsf{dgOp}$ and $\mathsf{dgLie^{gr}}$.

Let $L \in \mathsf{dgLie^{gr}}$. We can construct new graded dg-Lie algebras from $L$ by extension of scalars from $k$ to any $k$-dg-algebra. Let us define $\Om_n$ to be the dg-algebra of algebraic differential forms on $\Spec\left(\faktor{k[t_0, \dots, t_n]}{t_0+ \dots + t_n = 1} \right)$. As an algebra, we have
$$\Omn = k[t_0, \dots , t_n , dt_0 , \dots , dt_n ]/(1-\sum t_i , \sum dt_i)$$
where the generators $t_i$ have degree $0$ and the $dt_i$ have degree $1$. The algebras $\Omn$ define a simplicial object in the category of commutative dg-algebras in a natural way.
Then the simplicial graded dg-Lie algebra $L \otimes \Om_*$ is a simplicial resolution of $L$. Hence in $\mathsf{ dgLie^{gr}}$, the mapping space between two objects $L$ and $M$ has an explicit representative. Its $n$-simplices are
$$ \text{Map}_{\mathsf{dgLie}^{\mathsf{gr}}}(L,M) _n \ = \ \text{Hom}_{\mathsf{dgLie}^{\mathsf{gr}}}(Q(L),M\otimes_k \Om_n) \ ,$$
where Q is a cofibrant replacement of $L$.

Just as for graded dg-Lie algebras, given an operad $\O$ we can construct new operads by extension of scalars. 

\begin{prop}
For a dg-operad $\O$, the simplicial object $\O \otimes_k \Om_*$ (defined as above) gives a fibrant simplicial framing of the operad $\O$ (i.e. a fibrant replacement of $\O$ in the Reedy model category of simplicial objects in $\mathsf{dgOp}$).
\end{prop}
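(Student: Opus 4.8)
The plan is to verify directly the two defining properties of a fibrant simplicial frame on $\O$: that the structural degeneracy maps $\O = \O \otimes \Om_0 \to \O \otimes \Om_n$ are weak equivalences (so that $\O \otimes \Om_*$ is levelwise weakly equivalent to the constant object $c\O$, with $0$-th level equal to $\O$), and that $\O \otimes_k \Om_*$ is Reedy fibrant in the category of simplicial objects in $\mathsf{dgOp}$.

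First I would dispose of the weak equivalence. The degeneracy $\O = \O \otimes \Om_0 \to \O \otimes \Om_n$ is induced by the algebra unit $k = \Om_0 \to \Om_n$, which is a quasi-isomorphism of cdgas since $\Om_n$ is the algebra of polynomial de Rham forms on a contractible simplex and hence has cohomology $k$ concentrated in degree $0$. As $k$ is a field, each functor $\O(m) \otimes_k -$ is exact and preserves quasi-isomorphisms, so $\O(m) \otimes \Om_n \to \O(m)$ is a quasi-isomorphism in every arity $m$; equivalently $\O \to \O \otimes \Om_n$ is a weak equivalence in $\mathsf{dgOp}$. This gives the frame condition, with $\Om_0 = k$ ensuring the $0$-th level is $\O$ itself.

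The main point is Reedy fibrancy, and I would reduce it entirely to a statement about $\Om_*$. Fibrancy amounts to the matching map
$$ (\O \otimes \Om_*)_n = \O \otimes \Om_n \longrightarrow M_n(\O \otimes \Om_*) $$
being a fibration in $\mathsf{dgOp}$, i.e. a componentwise surjection, for every $n$. The forgetful functor from $\mathsf{dgOp}$ to symmetric sequences creates limits, and limits of symmetric sequences are computed arity-wise in $\mathsf{C(k)}$, so in arity $m$ the object $M_n(\O \otimes \Om_*)$ is the matching object of the simplicial dg-module $n \mapsto \O(m) \otimes \Om_n$. Since the matching object is a finite limit and $\O(m) \otimes_k -$ is exact, this functor commutes with the limit, giving $M_n(\O \otimes \Om_*)(m) \cong \O(m) \otimes_k M_n(\Om_*)$, under which the matching map identifies with $\mathrm{id}_{\O(m)} \otimes (\Om_n \to M_n \Om_*)$. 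Exactness once more shows $\O(m) \otimes_k -$ preserves surjections, so the matching map of $\O \otimes \Om_*$ is a componentwise surjection as soon as the matching map $\Om_n \to M_n(\Om_*)$ of the simplicial cdga $\Om_*$ is surjective.

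The remaining input is thus the classical fact that the Sullivan--de Rham simplicial algebra $\Om_*$ is Reedy fibrant: the matching object $M_n(\Om_*)$ is the algebra of compatible families of polynomial forms on the codimension-one faces of the $n$-simplex, and surjectivity of $\Om_n \to M_n(\Om_*)$ is precisely the extension lemma for polynomial forms (Bousfield--Gugenheim), asserting that any form on the boundary $\partial \Delta^n$ extends to $\Delta^n$. I would simply invoke this; it is the only genuinely analytic ingredient. (The case $n=0$ is immediate, as $M_0$ is terminal and every dg-operad is fibrant, as recalled in Section 1.) Combining this with the reduction above yields Reedy fibrancy of $\O \otimes \Om_*$, which together with the levelwise weak equivalence $c\O \to \O \otimes \Om_*$ exhibits $\O \otimes \Om_*$ as a fibrant simplicial frame of $\O$. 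The main obstacle is conceptual rather than computational: one must correctly interchange the tensor $\O \otimes_k -$ with the matching-object limit and with the notion of fibration in $\mathsf{dgOp}$, and everything hinges on $k$ being a field, which renders $\O(m) \otimes_k -$ exact and hence makes it preserve both the finite limits computing matching objects and the surjections defining fibrations.
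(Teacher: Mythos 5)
Your proof is correct, but it takes a genuinely more self-contained route than the paper, which disposes of this proposition in a single line by citing Theorem 7.3.5 of Part II, Chapter 7 of Fresse's book \cite{Fr}, where the statement is established in a general framework covering operads (and algebras, modules, etc.) over a suitable base category. You instead verify the two defining conditions of a fibrant simplicial frame by hand, and the two nontrivial inputs you isolate are exactly the right ones: (i) since $k$ is a field, each functor $\O(m)\otimes_k -$ is exact, hence preserves quasi-isomorphisms (which, together with the acyclicity of $\Om_n$, gives the levelwise weak equivalence from the constant simplicial object) and also preserves the finite limits and the surjections needed to identify the matching map of $\O\otimes_k\Om_*$ with $\mathrm{id}_{\O(m)}\otimes(\Om_n\to M_n\Om_*)$ arity by arity --- the reduction to arities resting, as you say, on the forgetful functor to symmetric sequences creating limits and on fibrations of dg-operads being componentwise surjections; and (ii) the Bousfield--Gugenheim extension lemma, which gives surjectivity of $\Om_n\to M_n(\Om_*)$. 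The citation buys brevity and generality; your computation buys transparency about precisely where each hypothesis enters ($k$ a field, the explicit description of fibrations in $\mathsf{dgOp}$, and the fibrancy of every dg-operad for the $n=0$ matching condition).
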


This follows directly from \cite{Fr}, Part II, Chapter 7 (in particular Theorem 7.3.5).


\begin{proof}[Proof of Theorem \ref{thm1}]
By construction, the $m$-simplices of the fiber are the $m$-simplices of the simplicial set $\underline{\Hom}_{\mathsf{dgOp}}(\widetilde{P}_{n,Q}, \O)$ that are sent to $\mu$, viewed as a degenerate $m$-simplex of $\underline{\Hom}_{\mathsf{dgOp}}(\Comm, \O)$. Therefore we can use Proposition \ref{set-fib} in order to compute them: they are the fiber of the function
$$\mathrm{Hom}_{\mathsf{dgOp}}(\widetilde{P}_{n,Q},\O \otimes \Om_m) \to \mathrm{Hom}_{\mathsf{dgOp}}(\Comm, \O \otimes \Om_m) $$
taken at the point $\mu$. Notice that we are being a bit sloppy in order to keep notation as simple as possible, as we are identifying $\mu : \Comm \to \O$ with the composition $\Comm \to \O \to \O \otimes \Omn$.
So Proposition \ref{set-fib} tells us that the $m$-simplices of the fiber are
$\Hom_{\mathsf{dgLie^{gr}}}(Q(k[-1](2)), \Li(s^{n} \MD(\O \otimes \Om_m))^{\mathbb S})$.

Observe now that multi-derivations of $\O \otimes \Omn$ are just multi-derivations of $\O$ with respect to $\mu$, considered over the dg-algebra $\Omn$. Concretely, this means
$\MD(\O \otimes \Omn) = \MD(\O) \otimes \Omn$ as graded dg-Lie algebras. Moreover, the operadic suspension commutes with extension of scalars, as does taking invariants. It follows that the graded dg-Lie of $n$-simplices is
$$\Hom_{\mathsf{dgLie^{gr}}}(Q(k[-1](2)), \Li(s^{n} \MD(\O))^{\mathbb S} \otimes \Om_m) = \mathrm{Map}_{\mathsf{dgLie^{gr}}}(k[-1](2), \Li(s^{n}\MD(\O))^{\mathbb S})_m$$
where the $\mathrm{Map}$ on the right is computed by means of the right homotopy function complex described before. These isomorphisms organize in a natural way to give an isomorphism of simplicial set between the fiber at $\mu$ and a right homotopy function complex $\mathrm{Map}_{\mathsf{dgLie^{gr}}}(k[-1](2), \Li (s^n \MD (\O))^{\mathbb S})$, and this proves the theorem.
\end{proof}

\section{Applications to derived algebraic geometry}

Let again $Q(k[-1](2))$ be a semi-free resolution of the dg-Lie algebra $k[-1](2)$. In this section we apply Theorem \ref{thm1} to the context of derived Poisson geometry. In particular, we will show in Theorem \ref{thm2} that a $n$-Poisson structure in the sense of \cite{PTVV} on a derived stack of the form $\Spec\, A$ (with $A$ concentrated in degree $(-\infty, m]$, with $m\geq0$) gives rise to a $\widetilde{P}_{n+1,Q}$-structure on $A$.

Recall from \cite{PTVV} that for a derived Artin stack $X$ which is locally of finite presentation, the space $\mathrm{Pois}(X,n)$ of $n$-Poisson structures on $X$ is by definition the mapping space $\mathrm{Map}_{\mathsf{dgLie^{gr}}}(k[-1](2),\mathrm{Pol}(X,n)[n+1])$, where $\mathrm{Pol}(X,n)$ is the graded Poisson dg-algebra of $n$-shifted polyvectors, that is to say
$$\mathrm{Pol}(X,n)=\R \G(X, \Sym_{\O_X} \T_X[-n-1])\ .$$
If $X=\Spec\, A$ is affine, $\mathrm{Pol}(X,n)$ becomes just $\Sym_A(\T_A[-n-1])$ with the usual Schouten-Nijenhuis bracket.

Recall also from \cite{Qu} that the category of bounded above cochain complexes have a natural model structure, taking as weak equivalences the quasi-isomorphisms and as fibrations the degree-wise surjections. This structure induces in the standard way (via the free-forgetful adjunction) a model structure on bounded above commutative dg algebras.

\begin{teo}\label{thm2}
Let $A$ be a cofibrant object in the model category of commutative dg algebras that are bounded above. Suppose that $A$, viewed as a derived stack, admits a n-shifted Poisson structure in the sense of \cite{PTVV}. Then $A$ has a structure of an $\widetilde{P}_{n+1,Q}$-algebra, whose commutative product coincide with the given multiplication in $A$.
More precisely, let $\mu_A$ be the multiplication in $A$, and let $\widetilde{P}_{n+1,Q}(A)$ be the fiber of the map of simplicial sets
$$ \underline{\mathrm{Hom}}_{\mathsf{dgOp}}(\widetilde{P}_{n+1,Q}, \text{\emph{End}}_A) \longrightarrow  \underline{\mathrm{Hom}}_{\mathsf{dgOp}}(\text{\emph{Comm}}, \text{\emph{End}}_A) $$
at the point $\mu_A$. 

We have a natural map of simplicial set
$$\mathrm{Pois}(\mathrm{Spec}\,A,n) \longrightarrow \widetilde{P}_{n+1,Q}(A)$$

Moreover, this map is a weak equivalence if the cotangent complex $\L_A$ is perfect.
\end{teo}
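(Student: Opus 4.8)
The plan is to show that both the source and the target of the desired map are mapping spaces in $\mathsf{dgLie^{gr}}$ out of the same object $k[-1](2)$, and then to produce a single morphism of graded dg-Lie algebras inducing the map. Applying Theorem \ref{thm1} to the operad $\mathrm{End}_A$ with the shift $n+1$ identifies the fiber $\widetilde{P}_{n+1,Q}(A)$ with $\mathrm{Map}_{\mathsf{dgLie^{gr}}}(k[-1](2), \Li(s^{n+1}\MD(\mathrm{End}_A))^{\mathbb S})$. On the other hand, for the affine stack $X=\Spec A$ one has by definition $\mathrm{Pois}(X,n)=\mathrm{Map}_{\mathsf{dgLie^{gr}}}(k[-1](2), \Sym_A(\T_A[-n-1])[n+1])$, the second argument being the shifted polyvectors with their Schouten--Nijenhuis bracket. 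Since $\mathrm{Map}_{\mathsf{dgLie^{gr}}}(k[-1](2),-)$, computed through the simplicial resolution $(-)\otimes \Om_*$, is a covariant functor of its second variable sending weak equivalences to weak equivalences, it suffices to construct a natural morphism of graded dg-Lie algebras
\[ \Phi : \Sym_A(\T_A[-n-1])[n+1] \longrightarrow \Li(s^{n+1}\MD(\mathrm{End}_A))^{\mathbb S} \]
and to prove it is a quasi-isomorphism when $\L_A$ is perfect.

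I would build $\Phi$ weight by weight. Sending a product of derivations $D_1,\dots,D_p\in\T_A$ to the multilinear map $a_1\otimes\cdots\otimes a_p\mapsto \pm\, D_1(a_1)\cdots D_p(a_p)$ produces, after symmetrization and insertion of the correct Koszul signs, a $p$-multi-derivation of $A$ in the sense of Definition \ref{multider}. The bigradings match: the operadic suspension $s^{n+1}$ shifts the arity-$p$ component by $[(1-p)(n+1)]$ and twists the $S_p$-action by the $(n+1)$-st power of the sign, while the standard shift isomorphism gives $\Sym_A^p(\T_A[-n-1])[n+1]\cong (\Sym_A^p\T_A)[(1-p)(n+1)]$ for $n+1$ even and $\cong (\Lambda_A^p\T_A)[(1-p)(n+1)]$ for $n+1$ odd. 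Correspondingly, the invariants $\Li(s^{n+1}\MD(\mathrm{End}_A))^{\mathbb S}$ are symmetric multi-derivations when $n+1$ is even and anti-symmetric ones when $n+1$ is odd, so $\Phi$ lands in the correct summand in each parity. The two genuine verifications are that $\Phi$ is a chain map and that it intertwines the Schouten--Nijenhuis bracket with the Lie bracket of $\Li(\mathrm{End}_A)$ restricted to multi-derivations; the latter is essentially the statement that the Schouten--Nijenhuis bracket is, by construction, the bracket induced by the pre-Lie product of the first Proposition of Section 2.1, but the signs introduced by the suspension and by the internal shift $[-n-1]$ must be carried through.

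Granting $\Phi$, the natural map of the theorem is simply $\mathrm{Map}_{\mathsf{dgLie^{gr}}}(k[-1](2),\Phi)$; it is well defined and functorial because the mapping space is a right homotopy function complex, hence invariant under weak equivalences of its second argument. For the last assertion I would show that $\Phi$ is a quasi-isomorphism as soon as $\L_A$ is perfect. The universal property of Kähler differentials identifies the symmetric $p$-multi-derivations of $A$ with $\Hom_A(\Sym_A^p\Om_A^1,A)$ (and the anti-symmetric ones with $\Hom_A(\Lambda_A^p\Om_A^1,A)$), and since $A$ is cofibrant one has $\Om_A^1\simeq\L_A$ with $\T_A\simeq \mathbf{R}\Hom_A(\L_A,A)$. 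In weight $p$ the map $\Phi$ is then the canonical comparison $\Sym_A^p\, \mathbf{R}\Hom_A(\L_A,A)\to \mathbf{R}\Hom_A(\Sym_A^p\L_A,A)$ (and its exterior analogue), which is a quasi-isomorphism exactly when $\L_A$ is dualizable in the derived category of $A$-modules, that is, perfect; here the characteristic-zero hypothesis lets me realize $\Sym^p$ and $\Lambda^p$ as direct summands of $(-)^{\otimes p}$ via the (anti)symmetrizing idempotents and thereby commute the dual past these powers. A weak equivalence of graded dg-Lie algebras induces a weak equivalence of right homotopy function complexes, so $\mathrm{Map}_{\mathsf{dgLie^{gr}}}(k[-1](2),\Phi)$ is then a weak equivalence.

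I expect the principal obstacle to lie not in the perfectness argument, which is a clean consequence of dualizability in characteristic zero, but in verifying that $\Phi$ is a morphism of graded dg-Lie algebras: one must match the operadic-suspension signs of $s^{n+1}$ against the Koszul signs of the internal shift $[-n-1]$, simultaneously in every arity, and confirm that the pre-Lie bracket on multi-derivations reproduces the Schouten--Nijenhuis bracket after these shifts.
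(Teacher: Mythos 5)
Your proposal is correct and follows essentially the same route as the paper: both reduce via Theorem \ref{thm1} to constructing a single morphism of graded dg-Lie algebras $\Sym_A(\T_A[-n-1])[n+1]\to \Li(s^{n+1}\MD(\mathrm{End}_A))^{\mathbb S}$, match the weight-$p$ components through the parity analysis of $k[1-p]^{\otimes(n+1)}$ (symmetric versus exterior powers), and obtain the equivalence from the duality comparison map when $\L_A$ is perfect. The only divergence is that where you propose to verify the bracket compatibility by a direct arity-by-arity sign computation, the paper mentions that option but then substitutes an abstract argument: a free--forget adjunction for $A$-algebras with compatible $k$-linear Lie structures, which, after equipping $\Li(s^{n+1}\MD(A))^{\mathbb S}[-n-1]$ with the shuffle product and checking the Leibniz rule, reduces the whole verification to the weight-one component $\T_A$ with its commutator bracket.
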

\begin{proof}
The simplicial set $\widetilde{P}_{n+1,Q}(A)$ has an equivalent description given by Theorem \ref{thm1}, namely we can rewrite it as $\mathrm{Hom}_{\mathsf{dgLie^{gr}}}(Q(k[-1](2)), \Li(s^{n+1}\MD(A))^{\mathbb S} \otimes \Om_*)$, where $\MD(A)$ is the Lie algebra of multi-derivations of the operad $\mathrm{End}_A$, with respect to the natural multiplication $\mu_A : \Comm \to \mathrm{End}_A$ (see Definition \ref{multider}); that is to say, the classically defined multi-derivations of the algebra $A$.  By functoriality, in order to prove the theorem it will suffice to build up a map of graded dg-Lie algebras
$$ \text{Sym}_A(\T_A[-n-1])[n+1] \longrightarrow \Li(s^{n+1}\MD(A))^{\mathbb S} 	\ .$$

To construct this morphism, notice that since $A$ is cofibrant, $\L_A$ is just the standard module of K\"ahler differentials, and multi-derivations of $A$ of arity $p$ are by definition the $A$-module $\Hom_{A}(\L_A^{\otimes p}, A)$. Hence the weight $p$ component of the graded dg-Lie algebra $\Li(s^{n+1}\MD(A))^{\mathbb S}$ is precisely given by the symmetric elements inside $\Hom_A(\L_A^{\otimes p},A) \otimes k[1-p]^{\otimes (n+1)}$, where $k[1-p]$ is the signature representation of $S_p$ concentrated in degree $p-1$. As an $S_p$-module, $k[1-p]^{\otimes n}$ can be either a trivial or a signature representation, depending on the parity of $n$. Concretely, we have
$$k[1-p]^{\otimes (n+1)} = \begin{cases}
\text{the trivial representation of } S_p  & \text{ if } n \text{ is odd }\\
\text{the signature representation of } S_p & \text{ if } n \text{ is even }
\end{cases}$$
where the $S_p$-modules are always concentrated in degree $(n+1)(p-1)$. It follows that as a dg-module, the weight $p$ part of $\Li(s^{n+1}\MD(A))^{\mathbb S}$ is isomorphic to $\Hom_A(\Sym_A^p\L_A,A)[(n+1)(1-p)]$ if $n$ is odd, and to $\Hom_A(\Lambda_A^p\L_A,A)[(n+1)(1-p)]$ if $n$ is even.

On the other hand, the weight $p$ component of $\text{Sym}_A(\T_A[-n-1])[n+1]$ is just $\text{Sym}^p_A(\T_A[-n-1])[n+1]$, and we have a natural map of $k$-dg-modules (actually of $A$-dg-modules)
$$\text{Sym}^p_A(\T_A[-n-1])[n+1] \longrightarrow \Hom_{A}(\Sym^p_A (\L_A[n+1]), A)[n+1] $$
induced by the fact that $\T_A$ is by definition the dual of $\L_A$. Notice that this map is not an equivalence in general: it becomes an equivalence however if we suppose that $\L_A$ is perfect.
Observe next that we have
$$\Sym^p_A (\L_A[n+1])= \begin{cases} \Sym^p_A(\L_A )[n(p-1)]  & \text{ if $n$ is odd }\\
 \Lambda^p_A(\L_A )[n(p-1)] & \text{ if $n$ is even }
\end{cases}$$
so that for every $n$, $\Hom_{A}(\Sym^p_A (\L_A[n+1]), A)[n+1] $ is isomorphic as a dg-module to  the weight $p$ component of $\Li(s^{n+1}\MD(A))^{\mathbb S}$.

Putting all this together, we do get a map of graded dg-modules
$$ \text{Sym}_A(\T_A[-n-1])[n+1] \longrightarrow \Li(s^{n+1}\MD(A))^{\mathbb S}  \ .$$
The point is to check that this map is compatible with the two Lie brackets: on the left hand side, we have the Schouten bracket, induced by the natural Lie structure on $\T_A$, while on the right hand side we have the bracket of the Lie algebra associated to the operad $s^{n+1}\mathrm{End}_A= \mathrm{End}_{A[n+1]}$.

This can be done by direct calculation, since both brackets have a known explicit expression. One has just to check that the signs coincide.

More abstractly, we can also observe that there is an adjunction
$$\left\{ \begin{array}{c} A\text{-dg-modules $X$ with a }  \\k\text{-linear dg-Lie structure on $X[m]$} \end{array} \right\} \leftrightarrows \left\{ \begin{array}{c} \text{commutative } A\text{-dg-algebras $X$ with a} \\ \text{compatible }  k\text{-linear dg-Lie structure on $X[m]$} \end{array} \right\}$$
where on the right hand side, compatible means that if we forget the $A$-action we are left with a $P_{m+1}$-algebra. Alternatively, these are just $P_{m+1}$-algebras in $C(k)$ whose underlying commutative algebra is actually an $A$-algebra, with no relation between the Poisson bracket and the $A$-action.

The adjunction is thus a ``lift'' of the usual free-forget adjunction between $A$-modules and $A$-algebras to the situation where the underlying $k$-modules have Lie structures.
The right adjoint is the forgetful functor, while the left adjoint sends $X$ to $\Sym_A(X)$. In particular this implies that if we were able to show that $\Li(s^{n+1}\MD(A))^{\mathbb S}[-n-1]$ has a compatible $A$-algebra structure, then the existence of a Lie algebra map
$$ \text{Sym}_A(\T_A[-n-1])[n+1] \longrightarrow \Li(s^{n+1}\MD(A))^{\mathbb S}  \ .$$
would follow from the existence of a morphism of Lie algebras (and of $A$-modules)
$$\T_A \longrightarrow \Li(s^{n+1}\MD(A))^{\mathbb S} \ .$$
But it follows from the definitions that the weight one component of $\Li(s^{n+1}\MD(A))^{\mathbb S}$ is precisely $\T_A$, and that the restriction of the bracket of $\Li(s^{n+1}\MD(A))^{\mathbb S}$ to $\T_A$ is the natural one (that is to say the graded commutator).

We are thus left to define an appropriate degree zero product on $\Li(s^{n+1}\MD(A))^{\mathbb S}[-n-1]$. It turns out that it is induced by the natural shuffle product on the multilinear morphisms from $A[n+1]$ to itself, which has the following explicit description. Denote by $\mu$ the multiplication of $A$; for $f \in \mathrm{End}_{A[n+1]}(p)$ and $g \in \mathrm{End}_{A[n+1]}(q)$, we pose
$$f \cdot g = \sum_{\s \in \mathrm{Sh}_{p,q}} (s^{n+1}\mu(f,g))^{\s}$$
where the sum is taken over all permutations $\s \in S_{p+q}$ such that $\s^{-1}(1)<\dots <\s^{-1}(p)$ and $\s^{-1}(p+1)<\dots<\s^{-1}(p+q)$. It easy to check that this defines a degree $m$ product, which becomes commutative if regarded on $\Li(\mathrm{End}_{A[n+1]})[-n-1]$. Moreover, if $f$ and $g$ are symmetric multi-derivations, then $f \cdot g$ is again a symmetric multi-derivation.
Finally, the graded Leibniz identity
$$[f,g\cdot h] = [f,g] \cdot h + (-1)^{|g|(|f|+n+1)}g\cdot[f,h]$$
for $f,g,h \in \Li(s^{n+1}\MD(A))^{\mathbb S}[-n-1]$ should be checked to be true. Notice that here the product $g\cdot h$ denotes the operation induced by the shuffle product defined above: this means that there are other signs involved, due to the so-called d\'ecalage isomorphism. The verification of the identity is a long but straightforward computation, and we omit the details.
To summarize, $\Li(s^{n+1}\MD(A))^{\mathbb S}[-n-1]$ is an $A$-algebra with a $k$-linear compatible Lie bracket of degree $-n-1$, and by the discussion above this proves the theorem.
\end{proof}

We can rephrase the results of Theorem \ref{thm2} in a different way:  we constructed a map of simplicial sets
$$ \text{Map}_{\mathsf{dgLie}^{\mathsf{gr}}}(k[-1](2),\text{Sym}_A(\T_A[-n-1])[n+1]) \xrightarrow{\ \phi\ } \underline{\text{Hom}}_{\mathsf{dgOp}}(\widetilde{P}_{n+1,Q}, \text{End}_A)$$
that fits in the following diagram
\begin{center}
\begin{tikzpicture}[description/.style={fill=white,inner sep=5pt}] 
\matrix (m) [matrix of math nodes, row sep=3em, 
column sep=3.5em, text height=1.5ex, text depth=0.25ex] 
{ \mathrm{Pois}(\Spec\, A,n) & \widetilde{P}_{n+1,Q}(A)  & \text{\underline{Hom}}_{\text{dgOp}}(\widetilde{P}_{n+1,Q} , \text{End}_A) \\
 & \text{\emph{pt}} & \text{\underline{Hom}}_{\text{dgOp}}(\text{Comm}, \text{End}_A) \\ };
\draw (-3.5,1.2) arc (150:30:3.7 and 1.2) [->];
\draw (-0.2,2.1) node {$\phi$};
\path[->,font=\scriptsize] 
(m-1-1) edge node[auto] {$ $} (m-1-2)
(m-1-2) edge node[auto] {$ $} (m-1-3)
edge node[auto] {$ $} (m-2-2)
(m-1-3) edge node[auto] {$ $} (m-2-3)
(m-2-2) edge node[auto] {$ \mu_A $} (m-2-3);
\end{tikzpicture} 
\end{center}
where the square on the right is a pullback of simplicial sets.

Let us weaken a bit our results in order to express them in a more homotopical language. The following theorem is the main result of this text.
\begin{teo}\label{thm3}
Let $A$ be a commutative dg algebra concentrated in degree $(-\infty,m]$, and let $X= \mathrm{Spec}\, A$ be the derived stack associated to $A$. Let $P_{n+1}^h(A)$ be the homotopy fiber of the morphism of simplicial sets
$$ \text{\emph{Map}}_{\text{\emph{dgOp}}}(\text{P}_{n+1}, \text{\emph{End}}_A) \longrightarrow  \text{\emph{Map}}_{\text{\emph{dgOp}}}(\text{\emph{Comm}}, \text{\emph{End}}_A) $$
taken at the point $\mu_A$ corresponding to the given (strict) multiplication in $A$.

Then there is a natural map in the homotopy category of simplicial sets
$$\mathrm{Pois}(X,n) \longrightarrow P_{n+1}^h(A) \ .$$

Moreover, this is an isomorphism if $\L_X$ is a perfect complex.
\end{teo}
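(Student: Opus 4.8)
The plan is to deduce Theorem \ref{thm3} from Theorem \ref{thm2} by reinterpreting both the strict fiber appearing there and the homotopy fiber $P^h_{n+1}(A)$ as one and the same derived mapping space, taken in the category of operads \emph{under} $\Comm$. First I would reduce to the setting of Theorem \ref{thm2}. Given $A$ concentrated in degrees $(-\infty,m]$, I choose a cofibrant replacement $A^c\to A$ in the model category of bounded-above commutative dg-algebras. Since $\mathrm{Pois}(X,n)$ depends only on the derived stack $X=\Spec A=\Spec A^c$ (through $\L_X$), it is unaffected. Working over a field, the quasi-isomorphism $A^c\to A$ is a chain homotopy equivalence of dg-$k$-modules, and I would invoke the invariance of the mapping spaces $\mathrm{Map}_{\mathsf{dgOp}}(\mathcal P,\mathrm{End}_{(-)})$ under such homotopy equivalences (via homotopy transfer) to get $P^h_{n+1}(A)\simeq P^h_{n+1}(A^c)$. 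This lets me assume $A$ cofibrant and use the map $\mathrm{Pois}(\Spec A,n)\to \widetilde P_{n+1,Q}(A)$ of Theorem \ref{thm2}, a weak equivalence when $\L_A$ is perfect.

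The heart of the argument is to identify the strict fiber $\widetilde P_{n+1,Q}(A)$ with the homotopy fiber $P^h_{n+1}(A)$, and I would do this inside the under-category $\Comm\backslash\mathsf{dgOp}$, endowed with the model structure created by the forgetful functor to $\mathsf{dgOp}$. Two facts drive the proof. First, the structure map $\Comm\to\widetilde P_{n+1,Q}$ is a cofibration: by construction $\widetilde P_{n+1,Q}$ is obtained from $\Comm$ by freely adjoining the (semi-free) generators of $\widetilde{s\Lie}_Q$, the rewriting rule only prescribing how $\mu$ commutes past them, so that $\Comm\to\widetilde P_{n+1,Q}$ is a (possibly transfinite) relative cell extension; hence $\widetilde P_{n+1,Q}$ is cofibrant as an object of $\Comm\backslash\mathsf{dgOp}$. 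Second, the map $\widetilde P_{n+1,Q}\to P_{n+1}$ induced by $\widetilde{s\Lie}_Q\to s\Lie$ (itself coming from $Q(k[-1](2))\to k[-1](2)$) is a weak equivalence under $\Comm$, since $\widetilde P_{n+1,Q}\simeq \widehat P_{n+1}\simeq P_{n+1}$. Together these show that $\widetilde P_{n+1,Q}$ is a cofibrant replacement of $P_{n+1}$ in $\Comm\backslash\mathsf{dgOp}$.

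With this in hand, both sides compute the same derived slice mapping space $\mathrm{Map}_{\Comm\backslash\mathsf{dgOp}}(P_{n+1},(\mathrm{End}_A,\mu_A))$. On one side, using the fibrant framing $\mathrm{End}_A\otimes\Om_*$, the cofibration $\Comm\to\widetilde P_{n+1,Q}$ makes $\underline{\Hom}_{\mathsf{dgOp}}(\widetilde P_{n+1,Q},\mathrm{End}_A\otimes\Om_*)\to\underline{\Hom}_{\mathsf{dgOp}}(\Comm,\mathrm{End}_A\otimes\Om_*)$ a Kan fibration, so the strict fiber $\widetilde P_{n+1,Q}(A)$ at $\mu_A$ is already a homotopy fiber and equals this derived slice mapping space. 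On the other side, by the standard description of under-category mapping spaces, the derived slice mapping space is the homotopy fiber of $\mathrm{Map}_{\mathsf{dgOp}}(P_{n+1},\mathrm{End}_A)\to\mathrm{Map}_{\mathsf{dgOp}}(\Comm,\mathrm{End}_A)$ over $\mu_A$, which is precisely $P^h_{n+1}(A)$. Composing the resulting weak equivalence $\widetilde P_{n+1,Q}(A)\simeq P^h_{n+1}(A)$ with the map of Theorem \ref{thm2} produces the desired natural morphism $\mathrm{Pois}(X,n)\to P^h_{n+1}(A)$ in $\mathrm{Ho}(\mathsf{sSet})$, an isomorphism as soon as $\L_X$ is perfect.

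The main obstacle I anticipate is exactly the passage encoded in facts (i) and (ii): arguing that replacing the concrete, computationally convenient operad $\widetilde P_{n+1,Q}$ (with its strict, non-cofibrant commutative product) by the invariant homotopy fiber over $P_{n+1}$ loses no information. The under-category viewpoint dissolves this cleanly, but it rests on the careful verification that $\Comm\to\widetilde P_{n+1,Q}$ is genuinely a cofibration, which in turn relies on the quasi-freeness of the distributive-law construction; the homotopy invariance of $\mathrm{End}_{(-)}$ used in the reduction to cofibrant $A$ is a secondary, though still nontrivial, technical point.
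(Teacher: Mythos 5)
The reduction to cofibrant $A$ and the overall strategy (identify the strict fiber of Theorem \ref{thm2} with the homotopy fiber $P^h_{n+1}(A)$) are the right ones, but the first of your two ``driving facts'' --- that $\Comm \to \widetilde{P}_{n+1,Q}$ is a cofibration, so that $\widetilde{P}_{n+1,Q}$ is a cofibrant replacement of $P_{n+1}$ in $\Comm\backslash\mathsf{dgOp}$ --- is not true, and it is exactly the crux of the theorem. The distributive-law construction does not ``freely adjoin'' the generators of $\widetilde{s\Lie}_Q$ to $\Comm$: the rewriting rule imposes the \emph{strict} Leibniz identities $f\circ_i\mu = (\mu\circ_1 f)^{\sigma}+(\mu\circ_2 f)^{\tau}$ as genuine relations, so $\widetilde{P}_{n+1,Q}$ is a quotient of the free extension of $\Comm$ by the $\widetilde{p}_i$, not a relative cell complex: cell attachments in $\mathsf{dgOp}$ (in Hinich's sense) only add generators together with prescribed differentials, they never identify pre-existing composites. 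This is the same phenomenon flagged in the Remark of Section 2.3: $\widehat{P}_n$ is \emph{not} a cofibrant model of $P_n$ precisely because the commutative product --- and, relevantly here, its interaction with the brackets --- is strict. Once this fails, the strict fiber $\widetilde{P}_{n+1,Q}(A)$ no longer tautologically computes a derived mapping space under $\Comm$, your Kan-fibration claim for $\underline{\Hom}_{\mathsf{dgOp}}(\widetilde{P}_{n+1,Q},\mathrm{End}_A\otimes\Om_*)\to\underline{\Hom}_{\mathsf{dgOp}}(\Comm,\mathrm{End}_A\otimes\Om_*)$ loses its justification, and the identification with $P^h_{n+1}(A)$ collapses.

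The paper's proof fills precisely this hole: it replaces the strict Leibniz rule by \emph{homotopy} derivations in the sense of Doubek--Lada, producing operads $P_{n+1,\infty}$ and $\overline{P}_{n+1,Q}=P_{n+1,\infty}\sqcup_{\Comm_{\infty}}\Comm$ for which the maps from $\Comm_{\infty}$ and $\Comm$ genuinely are cell extensions, hence cofibrations; the strict fiber over $\mu_A$ is then a homotopy fiber, and the pullback square transports it from the $\Comm_{\infty}$ picture to the strict-$\Comm$ picture. The price is a new comparison that your proposal has no substitute for: one must show that for cofibrant $A$ the space of $\widetilde{s\Lie}_Q$-structures by homotopy derivations agrees with the space of those by strict derivations, which rests on Doubek's theorem that $\mathrm{Der}^h(A)\simeq\mathrm{Der}(A)$ for cofibrant $A$. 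A secondary gap: even granting a cofibration $\Comm\to\widetilde{P}_{n+1,Q}$, identifying the derived mapping space in $\Comm\backslash\mathsf{dgOp}$ (computed with the strict, non-cofibrant base $\Comm$) with the homotopy fiber of the map of absolute derived mapping spaces is not ``standard''; it requires left properness of $\mathsf{dgOp}$ or an equivalent rectification statement, which is again essentially the content being proved.
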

\begin{proof}

Notice that since we are only looking for a morphism in the homotopy category of simplicial sets, we can safely suppose that $A$ is cofibrant: in fact, the homotopy type of both $\mathrm{Pois}(X,n)$ and $P_{n+1}^h(A)$ does not change if we replace $A$ with another algebra quasi-isomorphic to it. 

As already mentioned towards the end of Section 2, the mapping space between two operads $\P$ and $\Q$ can be computed by taking a cofibrant replacement of the first one and a simplicial resolution of the second one. Let us denote by $C$ the cofibrant replacement functor in the model category of dg-operads. In particular, one has 
$$\mathrm{Map}_{\mathsf{dgOp}}(\P,\Q) \cong \underline{\Hom}_{\mathsf{dgOp}}(C(\P),\Q)\ .$$ Notice that we don't need to replace $\Q$ with a fibrant model, since all operads are fibrant.

In order to compute the homotopy fiber $P_{n+1}^h(A)$, one has thus to take cofibrant models for the operad $\Comm$ and $P_{n+1}$. For example, let us take the minimal model $\Comm_{\infty}$ of $\Comm$, and take $P_{n+1,\infty}$ to be the operad whose algebras are $\Comm_{\infty}$-algebras together with a $\widetilde{s\Lie}_Q$-structure on $A[n]$ made of homotopy derivations, in the sense of \cite{DL}, \cite{Do}. This just means that the generators of $\widetilde{s\Lie}_Q$ satisfy the Leibniz identity only up to homotopy.

These are clearly cofibrant models for $\Comm$ and $P_{n+1}$, and there is an obvious forgetful functor $\Comm_{\infty} \to P_{n+1,\infty}$, that is actually easily seen to be a cofibration in $\mathsf{dgOp}$ using the characterization of cofibrations given in \cite{Hi}. This means that the induced morphism 
$$ \underline{\Hom}_{\mathsf{dgOp}}(P_{n+1,\infty}, \mathrm{End}_A)
\longrightarrow  \underline{\Hom}_{\mathsf{dgOp}}(\Comm_{\infty}, \mathrm{End}_A) $$  
is a fibration between fibrant simplicial sets, and therefore its \emph{strict} fiber is weakly equivalent to its homotopy fiber, which in turn is a model for $P_{n+1}^h(A)$, the homotopy fiber of
$$ \mathrm{Map}_{\mathsf{dgOp}}(P_{n+1}, \mathrm{End}_A) \longrightarrow  \mathrm{Map}_{\mathsf{dgOp}}(\Comm, \mathrm{End}_A) \ \ .$$

Let us now consider the following diagram of simplicial sets:

\begin{center}
\begin{tikzpicture}[description/.style={fill=white,inner sep=5pt}] 
\matrix (m) [matrix of math nodes, row sep=3em, 
column sep=3.5em, text height=1.5ex, text depth=0.25ex] 
{ \text{\underline{Hom}}_{\mathsf{dgOp}}(\overline{P}_{n+1,Q}, \text{End}_A) & \text{\underline{Hom}}_{\mathsf{dgOp}}(P_{n+1,\infty}, \text{End}_A) \\
\text{\underline{Hom}}_{\text{dgOp}}(\text{Comm}, \text{End}_A) & \text{\underline{Hom}}_{\text{dgOp}}(\text{Comm}_{\infty}, \text{End}_A) \\ };
\path[->,font=\scriptsize] 
(m-1-1) edge node[auto] {$ $} (m-1-2)
(m-1-1) edge node[auto] {$ $} (m-2-1)
(m-1-2) edge node[auto] {$ $} (m-2-2)
(m-2-1) edge node[auto] {$ $} (m-2-2);
\end{tikzpicture} 
\end{center}
where $\overline{P}_{n+1,Q}$ is the operad whose algebras are strictly commutative algebras together with a $\widetilde{s\Lie}_Q$-structure on $A[n]$ made of homotopy derivations. By definition, this is a pullback diagram of simplicial sets, so that the strict fiber of the map on the right (taken at the point $\mu$) is equivalent to the strict fiber of the map on the left (still taken at $\mu$; this makes sense since $\mu$ factors through $\underline{\Hom}_{\mathsf{dgOp}}(\Comm, \mathrm{End}_A)$).

But now the strict fiber of the map
$$ \underline{\Hom}_{\mathsf{dgOp}}(\overline{P}_{n+1,Q}, \mathrm{End}_A)
\longrightarrow  \underline{\Hom}_{\mathsf{dgOp}}(\Comm, \mathrm{End}_A) $$ 
is the space of $\widetilde{s\Lie}_Q$-structures on $A[n]$ made of homotopy derivations. Our next goal is now to describe this space, that can actually be quite complicated for a general $A$.

There is a naturally defined dg module $\mathrm{Der}^h(A)$ of homotopy derivation of $A$, which can be used to compute the Hochshild cohomology of the algebra $A$. Namely, instead of resolving $A$ and then computing strict derivations, one can leave $A$ unresolved and compute homotopy derivations (see \cite{Do}, section 3). In particular, this shows that for a cofibrant algebra $A$ one has a quasi-isomorphism $\mathrm{Der}(A) \cong \mathrm{Der}^h(A)$, where $\mathrm{Der}(A)$ is the standard complex of strict derivations of $A$. Let us remark that this result should not come as a surprise, since both $\mathrm{Der}(A)$ and $\mathrm{Der}^h(A)$ are in this case sensible candidates for the tangent complex of the algebra $A$, and one should expect no ambiguity in the definition of such a geometrically meaningful object.

In particular this tells us that for $A$ cofibrant, the space of $\widetilde{s\Lie}_Q$-structures on $A[n]$ made of homotopy derivations is weakly equivalent to the space of $\widetilde{s\Lie}_Q$-structures on $A[n]$ made of strict derivations; but this last space is by definition $\widetilde{P}_{n+1,Q}(A)$.
Now Theorem \ref{thm2} gives us a map of simplicial sets from $\mathrm{Pois}(A;n)$ to $\widetilde{P}_{n+1,Q}(A)$, which corresponds to a map in the homotopy category of simplicial sets from $\mathrm{Pois}(A;n)$ to $P_{n+1}^h(A)$.

\

We conclude by observing that the last statement of the theorem is a direct consequence of the analogous statement in Theorem \ref{thm2}.

\end{proof}

\section{Another proof of the main result}

In this last section we will give a more explicit description of our results: we take a particular resolution of the graded dg-Lie algebra $k[-1](2)$ and study the induced resolution of the $\Lie$ operad. We check that its algebras are just $\Lie_{\infty}$-algebras in the standard sense, see for example \cite{HS}. These concrete computations also give an alternative proof of Theorem \ref{thm3}.

The graded dgLie algebra $k[-1](2)$ has a cofibrant resolution $L_0$ given by the free Lie algebra generated by elements $p_i$ for $i=2,3,\dots$, such that $p_i$ sits in weight $i$ and in cohomological degree 1 ; the differential in $L_0$ is defined as to satisfy 
$$dp_n = -\frac 12 \sum_{i+j=n+1} [p_i,p_j] $$
Notice that in particular that we have $dp_2=0$. The map $L_0 \to k[-1](2)$ sends $p_2$ to the generator of $k[-1](2)$ and the other $p_i$ to zero.

By definition, the space of $n$-shifted Poisson structures on $A$ is 
$$\mathrm{Map}_{\mathsf{dgLie^{gr}}}(k[-1](2),\text{Sym}_A(\T_A[-n-1])[n+1])$$
and we can use the explicit resolution $L_0$ to compute its n-simplices: these are just elements in 
$$\text{Hom}_{\mathsf{dgLie}^{\mathsf{gr}}}(L_0,\text{Sym}_A(\T_A[-n-1])[n+1] \otimes \Omega_n)\ .$$
In particular, the points of the space of $n$-shifted structures on $A$ can be identified with
$$\text{Hom}_{\mathsf{dgLie}^{\mathsf{gr}}}(L_0,\text{Sym}_A(\T_A[-n-1])[n+1])\ .$$ 
If $A$ is cofibrant, then the dg module of derivations of $A$ is a model for $\T_A$. In this case the same argument used in the proof of Theorem \ref{thm2} proves that $\text{Sym}_A(\T_A[-n-1])[n+1]$ maps into the dg Lie algebra of symmetric (shifted) multi-derivations $\mathcal L(s^{n+1}\MD(A))^{\mathbb S}$, which in turn sits inside $\mathcal L(s^{n+1}\mathrm{End}_A)^{\mathbb S} \cong \mathcal L (\mathrm{End}_{A[n+1]})^{\mathbb S}$, the dg Lie of all symmetric multilinear maps of $A[n+1]$.

Putting all together, we get a map from the points of $\mathrm{Pois}(A,n)$ to
$$\text{Hom}_{\mathsf{dgLie}^{\mathsf{gr}}}(L_0,\bigoplus_{i \in \N} \Hom_k(\Sym^i_k(A[n+1]),A[n+1]))\ .$$

So at the level of the vertices, a $n$-Poisson structure on $A$ gives a sequence of symmetric multilinear maps $q_i$ (the images of the $p_i$) on $A[n+1]$, such that every $q_i$ is an $i$-linear map of degree $1$.

One of the possible definitions (see for example \cite{Man}) of a $L_{\infty}$-structures is the following.
\begin{defi}
If $V$ is a graded vector space, an $L_{\infty}$-structure on $V$ is a sequence of symmetric maps of degree $1$
$$l_n : \Sym^n  V[1] 
\rightarrow V[1] \ , \ \ 
n>0 $$
such that for every $n>0$ we have
$$\sum_{i+j=n+1}[l_i,l_j] = 0 \ , $$
where the bracket is the Lie bracket we defined before on $\displaystyle \bigoplus_{i \in \N} \Hom_k(\Sym^i_k(V[1]),V[1])$.
\end{defi}

So if we want to prove that (still at the level of the vertices) an $n$-Poisson structure
gives us an $L_{\infty}$-structure on $A[n]$, we could try to find such $l_n$ on $A[n+1]$. Natural candidates are the $q_i$ that come directly from the shifted Poisson structure; these are given for $i>1$. 
Notice that our brackets satisfy the graded antisymmetry relation $[x,y]=-(-1)^{|x||y|}[y,x]$ ; in particular, this relation does not involve the weights of $x$ and $y$. In our case $|p_i|=|q_i|=1$, and so it follows $[p_i,p_j]=[q_i,q_j]=[p_j,p_i]=[q_j,q_i]$. 
Let us take $q_1=d$, the differential of $A[n+1]$. We should now verify that the symmetric maps $q_i$ satisfy 
$$\sum_{i+j=n+1}[q_i,q_j] = 0 \ . $$

The other observation we need to make is that for every multilinear map
$f \in \Hom_k(\Sym^i_k(A[n+1]),A[n+1]))$,  we have $[q_1,f]=[f,q_1]=d(f)$, where $d$ here is the differential of multilinear maps on $A[n+1]$.

So using these facts we have
$$\sum_{i+j=n+1}[q_i,q_j] = 2d(q_n) + \sum_{\substack{i+j=n+1 \\  i,j>1}} [q_i,q_j] = 0 \ ,$$
which is what we wanted.
To summarize, an $n$-Poisson structure induces an $L_{\infty}$-structure on $A[n]$.

Now we need to show that the induced $L_{\infty}$-structure on $A[n]$ is compatible with the algebra structure on $A$, that is to say that $A$ becomes a semi-strict $P_{n+1}$-algebra. But the $q_i$ we constructed in the previous step are (by definition) derivations of the given commutative product on $A$ ; this gives $A$ precisely the structure of a semi-strict $P_{n+1}$-algebra.

The upshot of this discussion is the fact that we got a map
$$ \text{Hom}_{\mathsf{dgLie}^{\mathsf{gr}}}(L_0,\text{Sym}_A(\T_A[-n-1])[n+1]) \longrightarrow \text{Hom}_{\mathsf{dgOp}}(\widehat{P}_{n+1}, \text{End}_A)$$
for which the image is contained in the $\widehat{P}_{n+1}$-structures whose commutative product is the one given on $A$. Equivalently, we get a function from $ \text{Hom}_{\mathsf{dgLie^{gr}}}(L_0,\text{Sym}_A(\T_A[-n-1])[n+1]) $ to the (non-homotopical) fiber product of the following diagram of sets
\begin{center}
\begin{tikzpicture}[description/.style={fill=white,inner sep=5pt}] 
\matrix (m) [matrix of math nodes, row sep=3em, 
column sep=3.5em, text height=1.5ex, text depth=0.25ex] 
{  & \text{Hom}_{\text{dgOp}}(\widehat{P}_{n+1}, \text{End}_A) \\
\text{\emph{pt}} & \text{Hom}_{\text{dgOp}}(\text{Comm}, \text{End}_A) \\ };
\path[->,font=\scriptsize] 
(m-1-2) edge node[auto] {$ $} (m-2-2)
(m-2-1) edge node[auto] {$ \mu_A$} (m-2-2);
\end{tikzpicture} 
\end{center}
where $\mu_A$ denotes the given commutative product of $A$. From here one can proceed in the exact same way as done towards the end of Section 2: namely, we can use Theorem \ref{thm1} (and the explicit descriptions of the simplicial framings in dgOp and $\mathsf{dgLie^{gr}}$) in order to prove that we have a map of simplicial sets from $\text{\underline{Hom}}_{\mathsf{dgLie}^{\mathsf{gr}}}(L_0,\text{Sym}_A(\T_A[-n-1])[n+1])$ to the (strict) fiber of the natural map $ \text{\underline{Hom}}_{\mathsf{dgOp}}(\widehat{P}_{n+1}, \text{End}_A) \to \text{\underline{Hom}}_{\mathsf{dgOp}}(\mathrm{Comm}, \text{End}_A)$, taken at $\mu_A$. 

Now the same arguments used at the end of Section 3 allow to obtain a map in the homotopy category of simplicial sets
$$\mathrm{Pois}(X,n) \longrightarrow P_{n+1}^h(A)$$
giving a more concrete proof of Theorem \ref{thm3}.

\end{document}